\newtheorem{theorem}{Theorem}
\newtheorem{prop}{Proposition}
\newtheorem{lemma}{Lemma}
\theoremstyle{definition}
\newtheorem{definition}{Definition}
\theoremstyle{remark}
\newtheorem{remark}{Remark}
\newtheorem{example}{Example}
\newcommand{\E}{\mathbb{E}} 
\newcommand{\R}{{\mathbb R}}
\newcommand{\N}{{\mathbb N}}
\begin{document}

	\begin{frontmatter}
		
		\title{The unusual properties of aggregated superpositions of Ornstein-Uhlenbeck type processes}
		
		\runtitle{The unusual properties of supOU processes}
		
		\begin{aug}
			 \author{\fnms{Danijel} \snm{Grahovac} \thanksref{a}\corref{}\ead[label=e1]{dgrahova@mathos.hr}}
			 \address[a]{Department of Mathematics, University of Osijek, Trg Ljudevita Gaja 6, 31000 Osijek, Croatia. \printead{e1}}
			
			\author{\fnms{Nikolai N.} \snm{Leonenko}\thanksref{b}\ead[label=e2]{LeonenkoN@cardiff.ac.uk}}
			
			\address[b]{School of Mathematics, Cardiff University, Senghennydd Road, Cardiff, Wales, UK, CF24 4AG. \printead{e2}}
			
			\author{\fnms{Alla} \snm{Sikorskii}\thanksref{c}\ead[label=e3]{sikorska@stt.msu.edu}}
			
			\address[c]{Department of Psychiatry and Department of Statistics and Probability, Michigan State University, East Lansing, MI 48824, USA. \printead{e3}}
			\and
			\author{\fnms{Murad S.} \snm{Taqqu}\thanksref{d}				\ead[label=e4]{murad@bu.edu}}%
						
			\address[d]{Department of Mathematics and Statistics, Boston University, Boston, MA 02215, USA. \printead{e2}}
			
			\runauthor{D.~Grahovac et al.}
			
			\affiliation{University of Osijek}
			
		\end{aug}
		
		\begin{abstract}
			Superpositions of Ornstein-Uhlenbeck type (supOU) processes form a rich class of stationary processes with a flexible dependence structure. The asymptotic behavior of the integrated and partial sum supOU processes can be, however, unusual. Their cumulants and moments turn out to have an unexpected rate of growth. We identify the property of fast growth of moments or cumulants as \textit{intermittency}. 
	\end{abstract}
		
		\begin{keyword}
		\kwd{self-similarity}
		\kwd{supOU processes}
		\kwd{Ornstein-Uhlenbeck process}
		\kwd{intermittency}
		\kwd{cumulants}
		\kwd{moments}
		\end{keyword}
		
		
		
	\end{frontmatter}

\bigskip
\large{\today}\\
\bigskip

\section{Introduction}
L\'evy driven Ornstein-Uhlenbeck (OU) processes form a rich class of stationary processes with mixing properties. They can have any selfdecomposable distribution as their marginal distribution. Superpositions of OU type (supOU) processes  were introduced by Barndorff-Nielsen in \cite{barndorff1997processes} and \cite{bn2001} using a construction that was later generalized to obtain L\'evy mixing processes (see \cite{barndorff2013levy}). The supOU processes are stationary processes with a flexible dependence structure. A square integrable stationary process $X(t)$, $t\ge 0$ is said to have {\it short-range dependence} if its correlation function is integrable and {\it long-range dependence} if it is not integrable. It is possible for supOU processes to display not only short-range dependence but also long-range dependence. SupOU processes have found many applications, especially in finance where positive supOU processes are used in models for stochastic volatility; see \cite{bnshepard2001,moser2011tail,barndorff2013multivariate,barndorff2013stochastic,Stelzer2015,stelzer2015moment,griffin2010bayesian}.

In this paper we discuss the asymptotic properties of two variants of aggregated supOU process: \textit{the integrated process} obtained from a continuously observed supOU process and \textit{the partial sum process} obtained from a discretely sampled supOU process. These are of particular interest in finance where the integrated process represents the integrated volatility (see e.g.~\cite{barndorff2013multivariate}). When there are only finitely many OU type processes in the superposition, the mixing property remains valid and implies the convergence of the aggregated process to Brownian motion (see \cite{GLST2015}). Problems arise when one considers an infinite superposition of OU type processes. This paper provides a closer analysis to the corresponding behavior of moments and cumulants. Several attempts have been made to associate that behavior to rates in limit theorems but to no avail, see for example \cite{barndorff2005burgers,leonenko2005convergence}.

Intermittency, which will be defined bellow, refers to this unusual behavior of moments and cumulants. Note that our definition of intermittency will be different from the one used in \cite{barndorff2009brownian,barndorff2014assessing,podolskij2015}, where intermittency is associated with stochastic volatility. Here, as in the physics literature, intermittency is associated with the behavior of moments (\cite{zel1987intermittency,carmona1994parabolic}).

In order to study the asymptotic behavior of the aggregated processes, we investigate how the cumulants and moments evolve in time. The classical limiting scheme for some type of aggregated process $Y=\{Y(t),\, t \geq 0\}$ has the form
\begin{equation}
    \left\{ \frac{Y(nt)}{A_n} \right\} \overset{d}{\to} \left\{ Z(t) \right\}, \label{limitform}
\end{equation}
with convergence in the sense of convergence of all finite dimensional distributions as $n \to \infty$. By Lamperti's theorem (see, for example, \cite[Theorem 2.1.1]{embrechts2002selfsimilar}), the normalizing sequence is always of the form $A_n=L(n) n^H$ for some $H>0$ and $L$ slowly varying at infinity. Moreover, the limiting process $Z$ is $H$-self-similar, that is, for any $c>0$, $\{Z(ct)\}\overset{d}{=} \{c^H Z(t)\}$, where $\{\cdot\} \overset{d}{=} \{\cdot\}$ denotes the equality of finite dimensional distributions. For self-similar process, the moments evolve as a power function of time $\E|Z(t)|^q=\E|Z(1)|^q t^{Hq}$. Hence, for the process $Y$ satisfying a limit theorem in the form \eqref{limitform}, one expects that 
\begin{equation}
\frac{\E| Y(nt)|^q}{A_n^q} \to \E |Z(t)|^q, \quad \forall t \geq 0. \label{limitformmom}
\end{equation}
Therefore, $\E|Y(t)|^q$ grows roughly as $t^{Hq}$ when $t\to \infty$.
Indeed, ignoring the slowly-varying function $L$ and multiplicative constants, we have
$$
\E| Y(nt)|^q \approx n^{Hq}  \E |Z(t)|^q \approx n^{Hq} t^{Hq} \E |Z(1)|^q \approx (nt)^{Hq},
$$
 and hence
\begin{equation} \label{e:contradict1}
\E| Y(t)|^q \approx t^{Hq}\ \mbox{\rm as}\ t \rightarrow \infty.
\end{equation}
 (see Theorem \ref{theorem:limit} below for the precise statement).
 
We study aggregated processes $Y(t)$
 arising from supOU processes with a regularly varying correlation function and a marginal distribution having exponentially decaying tails, so that, in particular, all moments are finite. We show that these aggregated processes have a specific growth of moments: for a certain range of $q$, 
namely 
\begin{equation} \label{e:contr}
\E|Y(t)|^q \approx  t^{q-\alpha}\ \mbox{\rm as}\ t\to \infty,
\end{equation}
 \textit{Relation \eqref{e:contr}
 	contradicts (\ref{e:contradict1}).}
 Here $\alpha$ is the parameter related to the dependence structure of the underlying supOU process (see Theorems \ref{cor:intermittency-integrated} and \ref{cor:intermittency-partilsum} bellow).

We show that in our context the growth of the cumulants and moments is such that the relation between \eqref{limitform} and \eqref{limitformmom} falls apart. We refer to this property as \textit{intermittency}. The term is usually used to describe models exhibiting a high degree of variability and appears in different contexts across the literature; see e.g.~\cite{carmona1994parabolic,frisch1995turbulence,gartner2007geometric,khoshnevisan2014analysis,zel1987intermittency,chen2015moments}. Inspired by these approaches, we define intermittency as a property arising from a particular growth of moments. A precise definition is given in Section \ref{sec2}. 
In that section,
we show that for intermittent processes either a limit theorem as in \eqref{limitform} and convergence of moments \eqref{limitformmom} do not work together (see Theorem \ref{theorem:limit} below).
 
 Section \ref{sec3} provides an overview of facts relevant for the definition and properties of supOU processes. The expressions for cumulants are established for aggregated processes. In Section \ref{sec4}, the growth of cumulants is analyzed 
and we show in Theorems \ref{cor:intermittency-integrated} and
\ref{cor:intermittency-partilsum}
respectively that the integrated process and the partial sum of supOU processes can be intermittent.

\section{Intermittency}\label{sec2}
Intermittency is a property used to describe models exhibiting sharp fluctuations in time and a high degree of variability. Terms such as multifractality, separation of scales, dynamo effect are often used together with intermittency. The term has a precise definition in the theory of stochastic partial differential equations (SPDE), where it is characterized by the Lyapunov exponents (see e.g.~\cite{zel1987intermittency,carmona1994parabolic,khoshnevisan2014analysis,chen2015moments}). The $k$-th moment Lyapunov exponent of a non-negative random field $\{\psi(t,x), t\geq 0, x \in \R \}$ stationary in $x$ is defined by
\begin{equation}\label{SPDE_ver}
    \gamma(k)=\lim_{t\to\infty} \frac{\log \E\left(\psi(t,x)\right)^k}{t},
\end{equation}
assuming the limit exists and is finite. A random field $\{\psi(t,x)\}$ is then said to be intermittent if the sequence $\gamma(k)/k$, $k\in \N$ is strictly increasing, that is
\begin{equation*}
  \gamma(1) < \frac{\gamma(2)}{2} < \cdots < \frac{\gamma(k)}{k} < \cdots.
\end{equation*}
This property can be shown to imply under some assumptions that the random field has large peaks at different values of the space coordinate (see \cite{molchanov1991ideas,khoshnevisan2014analysis} for details).

We define \textit{intermittency} as a property which indicates that the moments of the stochastic process do not have a typical limiting behavior. Our focus will be on the behavior of the moments of the process in time as characterized by the scaling function defined below. The Lyapunov exponents are suitable for measuring the growth rate of random fields that have moments that grow exponentially in time. On the other hand, the scaling function is tailored for cumulative processes, e.g.~partial sum process, whose limiting behavior is investigated.

For a process $Y=\{Y(t),\, t \geq 0\}$, let $(0,\overline{q}(Y))$ denote the range of finite moments, that is
\begin{equation*}
    \overline{q}(Y) = \sup \{ q >0 :\E|Y(t)|^q < \infty  \ \forall t\}.
\end{equation*}
\begin{definition}
The \textbf{scaling function} at point $q \in (0,\overline{q}(Y))$ of the process $Y$ is
\begin{equation}\label{deftau}
    \tau_Y(q) = \lim_{t\to \infty} \frac{\log \E |Y(t)|^q}{\log t},
\end{equation}
assuming the limit exists and is finite.
\end{definition}
Note the difference between \eqref{SPDE_ver} and \eqref{deftau}. In our context, it is the scaling function \eqref{deftau} which is relevant.
It  can be shown that $\tau_Y$ is always convex and $q \mapsto \tau_Y(q)/q$ is non-decreasing (\cite{GLST2015}). Using the scaling function we characterize intermittency as a strict increase in the mapping $q \mapsto \tau_Y(q)/q$.

\begin{definition}
A stochastic process $Y=\{Y(t),\, t \geq 0\}$ is \textbf{intermittent} if there exist $p, r \in (0,\overline{q}(Y))$ such that
\begin{equation}\label{intermittency}
    \frac{\tau_Y(p)}{p} < \frac{\tau_Y(r)}{r}.
\end{equation}
\end{definition}

If $Y$ is a $H$-self-similar process, then $\tau_Y(q)=Hq$, and $\tau_Y(q)/q$ is constant, therefore the process is not intermittent. The following theorem shows that when the process $Y$ is not self-similar but has a {\it typical limit behavior} as described in the theorem (in particular, convergence to a self-similar process after suitable normalization) and if the corresponding moments converge, then its scaling function $\tau_Y$ turns out to be the same as for the self-similar process, namely $\tau_Y(q)=Hq$ for some $H>0$. 

\begin{theorem}\label{theorem:limit}
Let $Y=\{Y(t), \, t \geq 0\}$ and $Z=\{Z(t), \, t \geq 0\}$ be two processes such that $Z(t)$ is nondegenerate for every $t>0$ and suppose that for a sequence $(A_n)$, $A_n>0$, $\lim_{n\to \infty} A_n = \infty$, one has
\begin{equation}
    \left\{ \frac{Y(nt)}{A_n} \right\} \overset{d}{\to} \left\{ Z(t) \right\}, \label{thm:limitdistr}
\end{equation}
with convergence in \eqref{thm:limitdistr} in the sense of convergence of all finite dimensional distributions as $n \to \infty$. Then there exists a constant $H>0$ such that for every $q>0$ satisfying
\begin{equation}
    \frac{\E| Y(nt)|^q}{A_n^q} \to \E |Z(t)|^q, \quad \forall t \geq 0, \label{thm:limitmom}
\end{equation}
the scaling function \eqref{deftau} of $Y$ at $q$ is
\begin{equation}\label{e:Hq}
    \tau_Y(q)= H q.
\end{equation}
\end{theorem}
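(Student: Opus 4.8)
The plan is to reduce the statement to the behaviour of the single sequence $A_n$ together with the one-dimensional moment $\E|Y(n)|^q$, so that the scaling function can be read off along the integers. Write $g(t)=\E|Y(t)|^q$ throughout. First I would fix the constant $H$ via Lamperti's theorem: the finite-dimensional convergence \eqref{thm:limitdistr} with a nondegenerate limit $Z(t)$ for every $t>0$ and $A_n\to\infty$ forces, by Lamperti's theorem \cite{embrechts2002selfsimilar}, the limit $Z$ to be $H$-self-similar with $H>0$ and the normalization to be regularly varying, $A_n=n^H L(n)$ with $L$ slowly varying. This $H$ depends only on $(Y,Z,A_n)$ and not on $q$, which is exactly the constant asserted in the statement. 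The only consequence I will actually use is that $\log A_n/\log n = H + \log L(n)/\log n \to H$, since $L$ slowly varying gives $\log L(n)/\log n \to 0$.

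Next I would extract the needed consequence of the moment convergence \eqref{thm:limitmom} by specializing to $t=1$, which yields $g(n)/A_n^q \to \E|Z(1)|^q$. This limit is finite, being the limit of a convergent sequence of real numbers, and it is strictly positive because $Z(1)$ is nondegenerate, so $\E|Z(1)|^q>0$ for $q>0$. Consequently $\log\bigl(g(n)/A_n^q\bigr)$ converges to the finite constant $\log\E|Z(1)|^q$.

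Then I would compute the scaling function along the integers by splitting
\begin{equation*}
\frac{\log g(n)}{\log n} = \frac{\log\bigl(g(n)/A_n^q\bigr)}{\log n} + q\,\frac{\log A_n}{\log n}.
\end{equation*}
As $n\to\infty$ the first term tends to $0$ (a convergent, hence bounded, numerator divided by $\log n\to\infty$), while the second tends to $qH$ by the regular variation of $A_n$ established above; hence $\log g(n)/\log n \to Hq$ along $t=n\in\N$. Since $\tau_Y(q)$ is defined in \eqref{deftau} as the limit of $\log g(t)/\log t$ as $t\to\infty$ and this limit is assumed to exist, it must coincide with its value along the subsequence $t=n$, giving $\tau_Y(q)=Hq$.

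I expect the genuine obstacle to be precisely this last passage, from the integer subsequence to the full continuum limit defining $\tau_Y$. Without some regularity of $t\mapsto g(t)$ between consecutive integers (monotonicity, or uniformity in $t$ of the convergence \eqref{thm:limitmom}), the integer limit cannot on its own be upgraded to the continuum limit; I circumvent this by invoking the standing assumption, built into the definition \eqref{deftau}, that the scaling function exists, so that its value is pinned down by any single subsequence. If instead \eqref{thm:limitdistr}--\eqref{thm:limitmom} are available along a continuous scaling parameter, then taking $t=1$ shows directly that $g$ is regularly varying of index $Hq$, whence the continuum limit exists and equals $Hq$ with no extra hypothesis.
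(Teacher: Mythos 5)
Your proof is correct and follows essentially the same route as the paper's: Lamperti's theorem gives $H$-self-similarity of $Z$ together with $A_n = n^H L(n)$, and taking logarithms in the moment convergence \eqref{thm:limitmom} and dividing by $\log n$ yields $\tau_Y(q)=Hq$. The only differences are cosmetic: you specialize to $t=1$ where the paper keeps $t$ general (both arguments in truth control the limit only along countable subsequences, so both rest on the existence of the limit in \eqref{deftau}), and you make that last point explicit where the paper leaves it implicit.
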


\begin{proof}
By Lamperti's theorem (see, for example, \cite[Theorem 2.1.1]{embrechts2002selfsimilar}), \eqref{thm:limitdistr} implies the process $Z$ is $H$-self-similar with $H>0$ and $A_n$ is of the form $A_n=n^H L(n)$ for some function $L$ slowly varying at infinity. It follows from \eqref{thm:limitmom} that
\begin{eqnarray*}
  \log \frac{\E| Y(nt)|^q}{A_n^q} &=&
  \log \E| Y(nt)|^q - \log  (n^H L(n))^q \\
   &=& \log n \left( \frac{\log \E| Y(nt)|^q }{\log nt} \frac{\log nt}{\log n} - q\frac{\log \left( n^H L(n) \right)}{\log n} \right) \\
   &\to& \log \E| Z(t)|^q \text{ as } n \to \infty.
\end{eqnarray*}

Thus the factor in the parentheses that multiplies $\log n$ in the above equation must tend to zero as $n\to \infty $. Since $\log nt /\log n \to 1 $ as $n\to \infty $,
by \cite[Proposition 1.3.6(i)]{bingham1989regular}
\begin{equation*}
\lim _{n\to \infty }   \frac{\log \E| Y(nt)|^q }{\log nt} =\lim _{n\to \infty }  q\frac{\log \left( n^H L(n) \right)}{\log n} = H q + \lim _{n\to \infty } q\frac{\log L(n)}{\log n}=Hq.
\end{equation*}
Hence
$
\tau_Y(q)= H q.
$
\end{proof}

\begin{remark}
Assumption \eqref{thm:limitdistr} is the typical form in which limit theorems appear with $Y$ being a partial sum process or an integrated process. The limiting process is always self-similar, and the normalizing sequence is regularly varying. If in addition to \eqref{thm:limitdistr} convergence of moments holds, then $Y$ has a linear scaling function \eqref{e:Hq} and is not intermittent. \textit{Therefore, in the intermittent case either \eqref{thm:limitdistr} or \eqref{thm:limitmom} or both must fail to hold.}
\end{remark}

\begin{remark}
Notice that the scaling function involves only the one-dimensional marginal distributions of the process. Moreover, the conclusion of Theorem \ref{theorem:limit} holds if we assume that convergence in \eqref{thm:limitdistr} holds only for one-dimensional marginals. Indeed, from the proof of Lamperti's theorem \cite[Theorem 2.1.1]{embrechts2002selfsimilar}) this is enough to imply that $A_n=n^H L(n)$, and the same argument as in the proof of Theorem \ref{theorem:limit} applies.
\end{remark}

\begin{remark}
The relation between \eqref{thm:limitdistr} and \eqref{thm:limitmom} is a well known problem. In one direction, for a sequence of random variables convergence of moments implies weak convergence if the limiting distribution is uniquely determined by its moments. The question whether this is true is known as the moment problem (see e.g.~\cite[Section 11.]{stoyanov1997counterexamples} and references therein). On the other hand, for a sequence of random variables convergence of moments is implied by the weak convergence if the appropriately transformed sequence is uniformly integrable.
\end{remark}

Depending on the problem considered, it may be easier to establish intermittency by considering cumulants instead of moments. For $m\in \N$ and $t \geq 0$, let $\kappa_Y^{(m)}(t)$ denote the $m$-th order cumulant of $Y(t)$. The corresponding cumulant variant of the scaling function can be defined as
\begin{equation}\label{defsigma}
  \sigma_Y(m) = \lim_{t\to \infty} \frac{\log \left| \kappa_Y^{(m)}(t) \right|}{\log t}, \quad m \in \N,
\end{equation}
assuming $\kappa_Y^{(m)}(t)\neq 0$ and the limit exists and is finite. When the form of $\sigma_Y$ is established, the relation between moments and cumulants can be used to obtain the expression for $\tau_Y$. Note, however, that both
 (\ref{deftau})
and (\ref{defsigma}) involve absolute values.

In the next section, we review basic facts about the supOU processes. These provide great flexibility in modeling of stationary phenomena. This is becuse a supOU process can be chosen to have any selfdecomposable distribution as its marginal distribution and a variety of correlation structures. Some particular choices will lead to intermittent cumulative processes.

\section{SupOU processes}\label{sec3}
In order to define superpositions of OU type processes we introduce some notation and review basic facts about random measures and OU type processes.

\subsection{Preliminaries}
Let
\begin{equation*}
\kappa_Y(\zeta)=C\left\{ \zeta \ddagger Y\right\} = \log \E e^{i \zeta Y}
\end{equation*}
denote the cumulant (generating) function of a random variable $Y$ and, assuming it exists, $\kappa_Y^{(m)}$ for $m \in \N$ will denote the $m$-th cumulant of $Y$, that is
\begin{equation*}
  \kappa_Y^{(m)} = (-i)^m \frac{d^m}{d\zeta^m} \kappa_Y(\zeta) \big|_{\zeta=0}.
\end{equation*}
If $\kappa _Y(\cdot)$ is analytic around the origin, then
\begin{equation}\label{e:cgf}
\kappa_Y(\zeta)=\sum _{m=1}^\infty \frac {(i\zeta )^m}{m!}\kappa_Y^{(m)}.
\end{equation}
For a stochastic process $Y=\{Y(t)\}$ we write $\kappa_Y(\zeta,t) = \kappa_{Y(t)}(\zeta)$, and by suppressing $t$ we mean $\kappa_Y(\zeta)=\kappa_Y(\zeta,1)$, that is the cumulant function of the random variable $Y(1)$. Similarly, for the cumulants of $Y(t)$, we use the notation $\kappa_Y^{(m)}(t)$ and $\kappa_Y^{(m)}$ for $\kappa_Y^{(m)}(1)$. Recall that the cumulant function of infinitely divisible random variable $Y$ has the L\'{e}vy-Khintchine representation
\begin{equation*}
C\left\{ \zeta \ddagger Y\right\} =ia\zeta -\frac{b}{2}\zeta^{2}+\int_{\R}\left( e^{i\zeta x}-1-i\zeta \mathbf{1}_{[-1,1]}(x)\right) \mu(dx), \quad \zeta \in \R
\end{equation*}
where $a\in \R$, $b>0$, and the \textit{L\'{e}vy measure} $\mu$ is a deterministic Radon measure on $\R\backslash \{0\}$ such that $\mu\left( \left\{ 0\right\} \right) =0$ and $\int_{\R} \min \left\{ 1,x^{2}\right\} \mu(dx)<\infty$. The triplet $(a,b,\mu)$ is referred to as the \textit{characteristic triplet}. A stochastic process $\{L(t), \, t\geq 0\}$ with stationary, independent increments and continuous in probability ($L(t) \to^P 0$ as $t\to 0$) has a c\`adl\`ag modification which we refer to as a \textit{L\'evy process}. For any infinitely divisible random variable $Y$, there is a corresponding L\'evy process $\{L(t), \, t \geq 0\}$ such that $Y =^d L(1)$.

An infinitely divisible random variable $X$ is \textit{selfdecomposable} if its characteristic function $\phi (\theta)=\E e^{i\theta X}$, $\theta \in \R$, has the property that for every $c\in (0,1)$ there exists a characteristic function $\phi_{c}$ such that $\phi (\theta)=\phi (c\theta)\phi _{c}(\theta)$ for all $\theta\in \R$.
This means that that $X$ has the same distribution as $cX+Z_c$, where $X$ and $Z_c$ and independent, and $Z_c$ has the characteristic function $\phi _c$.
In this case, $X$ can be represented as
\begin{equation}\label{sdrepresentation}
  X= \int_0^{\infty }e^{-s} dL(s),
\end{equation}
where $L=\{L(t), \, t\geq 0\}$ is a L\'{e}vy process whose law is determined uniquely by that of $X$. The process $L$ is called the \textit{background driving L\'{e}vy process} (BDLP) corresponding to the infinitely divisible random variable $X$. The cumulant functions of $X$ and $L(1)$ are related by
\begin{equation}\label{kappaXtoL}
\kappa_{X}(\zeta )=\int_{0}^{\infty }\kappa_{L}(e^{-s}\zeta) ds.
\end{equation}
From \cite[Corollary 1]{jurek2001remarks} $\kappa_{X}$ is differentiable for $\zeta \neq 0$, $\zeta \kappa_X'(\zeta ) \to 0$ as $0\neq \zeta \to 0$ and
\begin{equation}\label{kappaLtoX}
\kappa_{L}(\zeta )=\zeta \kappa_X'(\zeta ).
\end{equation}
The BDLP $L$ can be extended to a two-sided L\'evy process by putting for $t<0$, $L(t)=-\widetilde{L}(-t-)$ where $\{\widetilde{L}(t), \, t \geq 0\}$ is an independent copy of the process $\{ L(t), \, t \geq 0\}$ modified to be c\`adl\`ag. The \textit{Ornstein-Uhlenbeck type (OU) process} is a process $\{X(t), \, t\in \R\}$ defined by
\begin{equation}\label{xonRdef}
  X(t) = e^{-\lambda t} \int_{-\infty}^t e^{\lambda s} dL (\lambda s) = \int_{\R} e^{-\lambda t + s} \mathbf{1}_{[0,\infty)}(\lambda t-s) dL (s),
\end{equation}
where $\lambda>0$.
It can be shown that $\{X(t), \, t\in \R\}$ is strictly stationary with the stationary distribution equal to the selfdecomposable law of $X$ corresponding to the BDLP $L$. When $X(t)$ has a finite second moment, the correlation function is $r(\tau)=e^{-\lambda \tau}$, $\tau\geq 0$ (\cite{bn2001}). Alternatively, starting with a L\'evy process $L$ satisfying $\E \log \left(1+ \left| L(1) \right| \right)< \infty$, one can define an OU type process as a stationary solution of the stochastic differential equation
\begin{equation*}
dX(t)=-\lambda X(t)dt + dL(\lambda t).
\end{equation*}

We now turn to supOU processes. To define them, we need some basic facts about \textit{infinitely divisible independently scattered random measures} (i.d.i.s.r.m.). Let $S$ be a Borel subset of $\R^{d}$ and let $\mathcal{S}$ be a $\sigma$-ring of $S$ (i.e.~countable unions of sets in $\mathcal{S}$ belong to $\mathcal{S}$ and if $A$ and $B$ are sets in $\mathcal{S}$ with $A\subset B$, then $B\backslash A \in \mathcal{S}$). A collection of random variables $\Lambda=\left\{ \Lambda(A), A\in \mathcal{S} \right\}$ defined on a probability space $(\Omega,\mathcal{F},P)$ is said to be an \textit{independently scattered random measure} if for every sequence $\left\{ A_{n}\right\} $ of disjoint sets in $\mathcal{S}$, the random variables $\Lambda(A_{n})$, $n=1,2,...$ are independent and if
\begin{equation*}
\Lambda\left( \bigcup\limits_{n=1}^{\infty }A_{n}\right) =\sum_{n=1}^{\infty} \Lambda(A_{n}) \quad a.s.
\end{equation*}%
whenever $\bigcup_{n=1}^{\infty}A_{n}\in \mathcal{S}$. We will be interested in the case when $\Lambda$ is infinitely divisible, that is, for each $A\in \mathcal{S}$, $\Lambda(A)$ is an infinitely divisible random variable whose cumulant function can be written as
\begin{equation*}
C\left\{ \zeta \ddagger \Lambda(A)\right\} =i\zeta m_{0}(A)-\frac{\zeta ^{2}}{2} m_{1}(A)  +\int_{\R}\left( e^{i\zeta x}-1-i\zeta \mathbf{1}_{[-1,1]}(x)\right) Q(A,dx),
\end{equation*}
where $m_{0}$ is a signed measure, $m_{1}$ is a positive measure and for every $A\in \mathcal{S}$, $Q(A,dx)$ is a measure on $\mathcal{B}(\R)$ without atom at $0$ such that $\int_{\R} \min \left\{ 1,x^{2}\right\} Q(A,dx)<\infty$. In this case we say that $\Lambda$ has the L\'{e}vy characteristics $(m_{0},m_{1},Q)$ and $Q$ is called the {\it generalized (deterministic) L\'{e}vy measure}. An important object in characterizing the class of non-random functions that are integrable with respect to $\Lambda$ is the \textit{control measure} $m$ defined as
\begin{equation*}
m(A)=\left| m_{0}\right| (A)+m_{1}(A)+\int_{\R}\min \left\{1,x^{2}\right\} Q(A,dx).
\end{equation*}%

The conditions for integrability of functions with respect to $\Lambda$ can be found in \cite{bn2001} and \cite{rajput1989spectral}. If function $f$ on $\R_+ \times \R$ is integrable with respect to the random measure $\Lambda$, then the cumulant function of the random variable $\int_{A}f d\Lambda$ is
\begin{equation}\label{integrationrule}
C\left\{ \zeta \ddagger \int_{A}f d\Lambda \right\} = \int_{A} \kappa_{L} (\zeta f(w))M(dw)
\end{equation}
where $\kappa_{L}$ is the cumulant function associated with the L\'evy basis $\Lambda$.
More details on  integration  can be found in \cite{rajput1989spectral}.

In defining the stationary supOU processes we will be interested in the homogeneous case where the characteristic triplet is of the form
$$
m_0=a M,\,  m_1=bM \quad \text{and} \quad  Q(dw,dx)=M(dw) \mu_L(dx),
$$
  where $a\in \R$, $b>0$, $\mu_L$ is a L\'evy measure and $M$ is a measure on $S$. Note that $M$ and $\mu_L$ are deterministic. Then   the cumulant function of the random variable $\Lambda (A)$ is
\begin{equation}\label{cumofHom}
C\left\{ \zeta \ddagger \Lambda(A)\right\} =M(A) \kappa_{L}(\zeta)
\end{equation}
where $\kappa_{L}$ is the cumulant function associated with the triplet $(a,b,\mu_L)$, i.e.
\begin{equation}\label{kappacumfun}
\kappa_{L}(\zeta) = i\zeta a -\frac{\zeta ^{2}}{2} b  +\int_{\R}\left( e^{i\zeta x}-1-i\zeta \mathbf{1}_{[-1,1]}(x)\right) \mu_L(dx).
\end{equation}
For more details see also \cite{fasen2007extremes,barndorff2011multivariate,barndorff2013multivariate,barndorff2013levy} where such measures are also referred to as L\'evy bases.

\subsection{SupOU processes}

Although OU type processes provide a rich class of stationary models, their correlation structure is rather limited from the modeling perspective. On the other hand, superpositions of OU type processes introduced in \cite{bn2001} provide far more flexibility and can exhibit long-range dependence. They are obtained by randomizing the parameter $\lambda$ in
(\ref{xonRdef}), using a probability measure $\pi$ with support in $\R_+$.
The probability measure $\pi$  will affect the dependence structure.
   We  present basic facts about these processes following \cite{bn2001} and \cite{fasen2007extremes} (see also \cite{barndorff2013levy}).
  
 Suppose $\Lambda$ is a homogenous
infinitely divisible independently scattered random measures
 on $S=\R_+ \times \R$ such that \eqref{cumofHom} holds with $M=\pi \times Leb$ being the product of a probability measure $\pi$ on $\R_+$ and the Lebesgue measure on $\R$. We say that $(a,b,\mu_L,\pi)$ is the \textit{generating quadruple} (\cite{fasen2007extremes}) and the corresponding independently scattered random measure $\Lambda$ will be referred to as the \textit{L\'evy basis}.

The following result gives the existence of a superposition Ornstein-Uhlenbeck process; see \cite[Theorem 3.1]{bn2001}. We denote the points in $\R_+ \times \R$ as $w=(\xi, s)$ and $\Lambda(dw)=\Lambda(d\xi,ds)$.

\begin{theorem}\label{thm:existencesupOU}
Let $\kappa_X$ be the cumulant function of some selfdecomposable law, $(a,b,\mu_L)$ be the characteristic triplet of the associated BDLP with cumulant function $\kappa_L$ and let $\pi$ be a probability measure on $\R_+$. Define the L\'evy basis $\Lambda$ on $\R_+\times \R$ with generating quadruple $(a,b,\mu_L,\pi)$ and set
\begin{equation}\label{supOU}
X(t)=\int_{\R_+} e^{-\xi t}\int_{-\infty }^{\xi t}e^{s} \Lambda(d\xi,ds) = \int_{\R_+} \int_{\R} e^{-\xi t + s} \mathbf{1}_{[0,\infty)}(\xi t -s) \Lambda(d\xi,ds).
\end{equation}
Then $X=\{X(t), \, t\in \R\}$ is a well-defined, infinitely divisible and strictly stationary process. Moreover, for $t_1<\cdots < t_m$, the joint cumulant function
of $(X(t_1),\cdots,X(t_m)$ is
  $$
  C \left\{ \zeta_1, \dots, \zeta_m \ddagger \left(X(t_1),\dots , X(t_m) \right) \right\}
  $$
  \begin{equation}\label{cumfidissupOU}  
   = \int_{\R_+} \int_{\R} \kappa_L \left( \sum_{j=1}^m \mathbf{1}_{[0,\infty)} (\xi t_j-s) \zeta_j e^{-\xi t_j + s} \right) ds \, \pi(d\xi).
\end{equation}
In particular, since  $X=\{X(t), \, t\in \R\}$ is stationary,
\begin{equation*}
C\left\{ \zeta \ddagger X(t)\right\} = \kappa_X(\zeta),
\end{equation*}
and assuming that $X(t)$ has finite second moment, its correlation function is given by
\begin{equation}\label{corrsupOU}
r(\tau )=\int_{\R_+} e^{-\tau \xi }\pi (d\xi ), \quad \tau \geq 0.
\end{equation}
\end{theorem}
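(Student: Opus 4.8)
The plan is to express every linear combination $\sum_{j=1}^m \zeta_j X(t_j)$ as a single stochastic integral against the L\'evy basis and then read off its cumulant function from the integration rule \eqref{integrationrule}. Writing $f_t(\xi,s)=e^{-\xi t+s}\mathbf{1}_{[0,\infty)}(\xi t-s)$ for the kernel appearing in \eqref{supOU}, linearity of the integral gives $\sum_{j=1}^m \zeta_j X(t_j)=\int_{\R_+}\int_{\R}\left(\sum_{j=1}^m \zeta_j f_{t_j}(\xi,s)\right)\Lambda(d\xi,ds)$, so the theorem reduces to four tasks: (i) checking that the kernel is $\Lambda$-integrable; (ii) applying \eqref{integrationrule} with control measure $M=\pi\times Leb$ to obtain \eqref{cumfidissupOU}; (iii) verifying strict stationarity from the shift invariance of that formula; and (iv) specializing it to recover the marginal law and the correlation \eqref{corrsupOU}.

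The main obstacle is step (i). I would verify the three Rajput--Rosinski integrability conditions \cite{rajput1989spectral} for $f=f_t$: a drift term built from $a$ and $\mu_L$, a Gaussian term weighted by $b$, and a jump term of the form $\int\int \min\{1,|x f_t(\xi,s)|^2\}\,\mu_L(dx)\,M(d\xi,ds)$. For each fixed $\xi$ the inner $ds$-integral is, after the substitution $u=\xi t-s$, exactly the integrability statement for the selfdecomposable representation \eqref{sdrepresentation} of $X$, which holds by hypothesis; integrating the resulting bounds over $\xi$ against the probability measure $\pi$ keeps everything finite. The finiteness of the cumulant function itself is transparent from the same substitution, since $\int_{\R}\kappa_L(\zeta f_t(\xi,s))\,ds=\int_0^{\infty}\kappa_L(\zeta e^{-u})\,du=\kappa_X(\zeta)$ by \eqref{kappaXtoL}, independently of $\xi$, and $\kappa_X$ is a genuine finite cumulant function. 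The delicate point is to control these three pieces uniformly enough in $\xi$ to justify the interchange and to invoke \eqref{integrationrule}; this is where the selfdecomposability hypothesis does the real work.

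Granting integrability, step (ii) is immediate: applying \eqref{integrationrule} to $\sum_{j=1}^m\zeta_j f_{t_j}$ yields \eqref{cumfidissupOU}, and infinite divisibility follows because this joint cumulant function is an integral of the L\'evy exponent $\kappa_L$. For step (iii), I would show that the right-hand side of \eqref{cumfidissupOU} is unchanged under the shift $t_j\mapsto t_j+h$: for each fixed $\xi$ the substitution $s\mapsto s+\xi h$ in the inner integral, combined with translation invariance of Lebesgue measure, restores the original integrand, so all finite-dimensional cumulant functions---and hence all finite-dimensional distributions---are shift invariant.

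Finally, step (iv). The marginal identity $C\{\zeta\ddagger X(t)\}=\kappa_X(\zeta)$ is precisely the $\xi$-free computation already used above. For the correlation, I would use that \eqref{integrationrule} gives $\Cov(X(t_1),X(t_2))=\kappa_L^{(2)}\int_{\R_+}\int_{\R}f_{t_1}(\xi,s)f_{t_2}(\xi,s)\,ds\,\pi(d\xi)$, where $\kappa_L^{(2)}$ is the second cumulant associated with $\kappa_L$. Taking $t_1=0$ and $t_2=\tau\ge 0$, the overlap integral collapses to $\int_{-\infty}^{0}e^{2s-\xi\tau}\,ds=\tfrac12 e^{-\xi\tau}$, while the variance is $\tfrac12\kappa_L^{(2)}$ (the case $\tau=0$); dividing gives $r(\tau)=\int_{\R_+}e^{-\xi\tau}\pi(d\xi)$, establishing \eqref{corrsupOU}. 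The overall argument follows the template of \cite[Theorem 3.1]{bn2001}.
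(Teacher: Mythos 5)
The paper never proves this theorem---it is imported verbatim from \cite[Theorem 3.1]{bn2001}, with only the one-line remark that \eqref{corrsupOU} follows by setting $m=2$ in \eqref{cumfidissupOU}, differentiating in $\zeta_1,\zeta_2$ and letting them tend to $0$---so there is no internal proof to compare against; judged on its own, your reconstruction is correct and follows the same standard route as the cited source. All the key steps check out: writing $\sum_j \zeta_j X(t_j)$ as one integral of $\sum_j \zeta_j f_{t_j}$ and invoking \eqref{integrationrule} gives \eqref{cumfidissupOU}; the substitution $u=\xi t-s$ makes every inner $ds$-integral not just uniformly bounded but \emph{constant} in $\xi$, so the ``delicate'' uniformity you worry about is automatic---the Rajput--Rosinski conditions reduce exactly to those for \eqref{sdrepresentation}, which selfdecomposability supplies, and integration against the probability measure $\pi$ costs nothing; the shift $s\mapsto s+\xi h$ yields strict stationarity. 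Your final computation (overlap $\int_{-\infty}^{0}e^{2s-\xi\tau}\,ds=\tfrac12 e^{-\xi\tau}$ against variance $\tfrac12\kappa_L^{(2)}$, consistent with $\kappa_X^{(2)}=\kappa_L^{(2)}/2$ from \eqref{kappaXtoL}) is precisely the differentiation of \eqref{cumfidissupOU} at $m=2$ that the paper's remark sketches.
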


\begin{definition}
The process $X=\{X(t), \, t\in \R\}$ defined by \eqref{supOU} in Theorem \ref{thm:existencesupOU} is called a superposition Ornstein-Uhlenbeck (supOU) process.
\end{definition}

Relation \eqref{corrsupOU} is obtained by setting $m=2$ in \eqref{cumfidissupOU}, taking derivatives with respect to $\zeta_1$ and $\zeta_2$ and letting them tend to $0$. By comparing the definition of superposition \eqref{supOU} with the standard OU type process \eqref{xonRdef}, one can see the supOU process is obtained by randomizing the parameter $\lambda$ in \eqref{xonRdef} according to the probability measure $\pi$. A choice of $\pi$ will play an important role. Taking $\pi$ as in \eqref{regvarofpi} below will make $X$ long-range dependent.

\begin{remark} Here is a summary of the measures involved. The supOU process $X(t)$ in \eqref{supOU} is defined through an integral involving the random measure $\Lambda(d\xi, ds)$. For a fixed $t$, the corresponding cumulant function is
\begin{equation*}
    \kappa_X(\zeta) = C \left\{ \zeta \ddagger X(t) \right\} = \int_{\R_+} \int_{\R} \kappa_L \left( \mathbf{1}_{[0,\infty)} (\xi t-s) \zeta e^{-\xi t + s} \right) ds \, \pi(d\xi)
\end{equation*}
where $\kappa_L$ given in \eqref{kappacumfun} is associated with the L\'evy basis $\Lambda $ and involves the L\'evy measure $\mu_L$. The cumulant function $\kappa_X$ thus involves the corresponding deterministic measure
\begin{equation*}
  Q(dw,dx)=M(dw) \mu_L(dx) = \pi(d \xi) Leb(ds)  \mu_L(dx),
\end{equation*}
where $w=(\xi,s)$.
\end{remark}

\begin{remark}
In \cite{fasen2007extremes}, a supOU process is defined as
\begin{equation}\label{defsupOUlternative}
\widetilde{X}(t)=\int_{\R_+} \int_{\R} e^{-\xi (t-s)} \mathbf{1}_{[0,\infty)} (t-s) \widetilde{\Lambda}(d\xi,ds),
\end{equation}
where $\widetilde{\Lambda}$ has generating quadruple $(\widetilde{a},\widetilde{b},\widetilde{\mu}_L,\widetilde{\pi})$ such that $\rho:=\int_{\R_+}\xi^{-1} \widetilde{\pi}(d\xi) < \infty$. However, the two approaches are equivalent. Taking  $a=\rho \widetilde{a}$, $b=\rho \widetilde{b}$, $\mu_L=\rho \widetilde{\mu}_L$ and $\pi(d \xi)=\rho^{-1} \xi^{-1} \widetilde{\pi}(d\xi)$ in Theorem \ref{thm:existencesupOU}, we obtain a process which is a version of $\widetilde{X}$ defined in \eqref{defsupOUlternative} (see \cite[Proposition 2.1]{fasen2007extremes}).
\end{remark}

\begin{example}
If the measure $\pi$ in \eqref{cumfidissupOU} is degenerate such that $\pi\left(\{\lambda\}\right)=1$ for some $\lambda>0$, then it follows from \eqref{cumfidissupOU} that the finite dimensional distributions of $X$ are the same as for the standard OU type process \eqref{xonRdef}, that is
\begin{equation*}
  C \left\{ \zeta_1, \dots, \zeta_m \ddagger \left(X(t_1),\dots , X(t_m) \right) \right\} = \int_{\R} \kappa_L \left( \sum_{j=1}^m \mathbf{1}_{[0,\infty)} (\lambda t_j-s) \zeta_j e^{-\lambda t_j + s} \right) ds.
\end{equation*}
\end{example}

\begin{example}\label{exa:discretesup}
Suppose $\pi$ in \eqref{cumfidissupOU} is a discrete probability measure such that $\pi\left(\{\lambda_k\}\right)=p_k$, $k \in \N$ and $\lambda_k>0$. Then we have that
\begin{equation*}
  C \left\{ \zeta_1, \dots, \zeta_m \ddagger \left(X(t_1),\dots , X(t_m) \right) \right\} = \sum_{k=1}^\infty \int_{\R} p_k \kappa_L \left( \sum_{j=1}^m \mathbf{1}_{[0,\infty)} (\lambda_k t_j-s) \zeta_j e^{-\lambda_k t_j + s} \right) ds.
\end{equation*}
Thus in this case $X$ has the same distribution as the infinite discrete type superposition
\begin{equation*}
  \left\{\sum_{k=1}^\infty X^{(k)}(t), \ t \in \R \right\},
\end{equation*}
where $\{X^{(k)}(t), \, t \in \R\}$, $k \in \N$ are independent standard OU type processes corresponding to parameter $\lambda_k$ and BDLP with cumulant function $p_k \kappa_L$, $k \in \N$. In the case of finite second moment, such discrete type superposition is well defined in the sense of $L^2$ and a.s. convergence (see \cite{GLST2015}), and from \eqref{corrsupOU} the correlation function is
\begin{equation*}
  r(\tau)= \sum_{k=1}^\infty e^{-\lambda_k \tau} p_k, \quad \tau \geq 0.
\end{equation*}
\end{example}

By appropriate choices of probability measure $\pi$ one can achieve different correlation structures of the supOU processes. We will use the notation $f\sim g$ if $f(x)/g(x)\to 1$ as $x\to 0$ or $x\to \infty $. It follows from \eqref{corrsupOU} that the correlation function can be considered as the Laplace transform of $\pi$. Using Karamata's Tauberian theorem \cite[Theorem 1.7.1$'$]{bingham1989regular} one can easily obtain the following result (\cite{fasen2007extremes}).

\begin{prop}\label{prop:corrasymptotic}
Suppose $X$ is a square integrable supOU process with correlation function $r$, $L$ is a slowly varying function at infinity and $\alpha>0$. Then
\begin{equation}\label{regvarofpi}
  \pi \left( (0,x] \right) \sim L(x^{-1}) x^{\alpha}, \quad \text{ as } x \to 0
\end{equation}
if and only if
\begin{equation}\label{e:cdep}
  r( \tau) \sim \Gamma(1+\alpha) L(\tau) \tau^{-\alpha}, \quad \text{ as } \tau \to \infty.
\end{equation}
\end{prop}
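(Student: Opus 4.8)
The plan is to recognize the correlation function as a Laplace--Stieltjes transform and then invoke Karamata's Tauberian theorem directly. Write $U(x) = \pi\left((0,x]\right)$ for the distribution function of the probability measure $\pi$. Since $\pi$ is supported on $\R_+$, the function $U$ is nondecreasing, right-continuous and bounded by $U(\infty)=1$, and from \eqref{corrsupOU} we have
\begin{equation*}
r(\tau) = \int_{\R_+} e^{-\tau \xi}\,\pi(d\xi) = \int_0^\infty e^{-\tau x}\,dU(x),
\end{equation*}
so $r$ is exactly the Laplace--Stieltjes transform of $U$, finite for every $\tau\ge 0$ because $r(\tau)\le U(\infty)=1$. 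This reduces the claimed equivalence to a statement purely about a monotone function and its Laplace transform, for which the Tauberian machinery applies.

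Next I would line up the behaviors at the two ends. The decay of $r(\tau)$ as $\tau\to\infty$ is governed by the mass of $\pi$ near the origin, that is, by the behavior of $U(x)$ as $x\to 0^+$; this is precisely the regime covered by the ``primed'' form of Karamata's theorem \cite[Theorem 1.7.1$'$]{bingham1989regular}, which swaps the roles of $0$ and $\infty$ relative to the standard statement. Applying that theorem with index $\rho=\alpha>0$ yields, for any function $\ell$ slowly varying at infinity, the equivalence
\begin{equation*}
U(x) \sim \frac{x^\alpha}{\Gamma(1+\alpha)}\,\ell(1/x)\ \text{as}\ x\to 0^+ \quad\Longleftrightarrow\quad r(\tau)\sim \tau^{-\alpha}\,\ell(\tau)\ \text{as}\ \tau\to\infty.
\end{equation*}

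To finish I would match the slowly varying factors and the constant. Taking $\ell=\Gamma(1+\alpha)\,L$ turns the left-hand asymptotic into $U(x)\sim L(1/x)\,x^\alpha$, which is \eqref{regvarofpi}, and simultaneously turns the right-hand asymptotic into $r(\tau)\sim\Gamma(1+\alpha)\,L(\tau)\,\tau^{-\alpha}$, which is \eqref{e:cdep}; since $\ell$ is slowly varying if and only if $L$ is, the two conditions are equivalent and the proposition follows. One should note that \eqref{regvarofpi} forces $U(0^+)=0$, so $\pi$ has no atom at $0$ (consistent with $r(\tau)\to 0$ as $\tau\to\infty$ when $\alpha>0$, by monotone convergence), and that keeping $\alpha>0$ avoids the boundary index $\rho=0$ where the transfer between the two asymptotics requires extra care.

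The step deserving the most attention is not any computation but the correct invocation of the Tauberian direction. The Abelian implication, passing from the behavior of $U$ to that of $r$, is elementary; the converse, recovering the behavior of $U$ near $0$ from that of $r$ at infinity, carries the genuine Tauberian content and relies essentially on the monotonicity of $U$. Fortunately this monotonicity is automatic here because $U$ is a distribution function, so both implications in Proposition \ref{prop:corrasymptotic} hold at once, and the only bookkeeping to watch is the placement of the $\Gamma(1+\alpha)$ factor, which the substitution $\ell=\Gamma(1+\alpha)L$ absorbs cleanly.
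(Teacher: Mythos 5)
Your proposal is correct and takes essentially the same route as the paper: the paper also views $r(\tau)=\int_{\R_+}e^{-\tau\xi}\,\pi(d\xi)$ as the Laplace--Stieltjes transform of $\pi$ and cites Karamata's Tauberian theorem \cite[Theorem 1.7.1$'$]{bingham1989regular} to pass between \eqref{regvarofpi} and \eqref{e:cdep}. Your write-up merely makes explicit the constant bookkeeping (the substitution $\ell=\Gamma(1+\alpha)L$) and the monotonicity of $U$ that the paper leaves implicit.
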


The bigger the mass of $\pi$ is near origin, the slower is the decay of the correlation function at infinity. Hence, in view of (\ref{e:cdep}), if $\alpha \in (0,1)$ the correlation function is not integrable, and supOU process exhibits long-range dependence. We will denote
\begin{equation*}
\alpha=2 \overline{H}=2(1-H)
\end{equation*}
with $H$ as the long-range dependence parameter. Hence $\alpha \in (0,1)$ corresponds to $H \in (1/2, 1)$. More details on the dependence structure in specific examples can be found in \cite{barndorff2005spectral}.

\begin{example}\label{ex:pi:gamma}
Suppose $X$ is a supOU process such that $\pi $ is Gamma distribution with density
\begin{equation*}
  f(x)= \frac{1}{\Gamma(\alpha)} x^{\alpha-1} e^{-x} \mathbf{1}_{(0,\infty)}(x),
\end{equation*}
where $\alpha>0$. Then
\begin{equation*}
  \pi ((0,x])= \frac{\gamma(\alpha,x)}{\Gamma(\alpha)}, \quad x>0,
\end{equation*}
where $\gamma(\alpha,x)=\int_0^x u^{\alpha-1} e^{-u} du$ is the incomplete Gamma function. From the asymptotic expansion of $\gamma$ (\cite[Eq.~6.5.4 and Eq.~6.5.29]{abramowitz1964handbook}) we have that
\begin{equation*}
  \pi ((0,x]) \sim \frac{1}{\Gamma(\alpha+1)} x^{\alpha}, \quad \text{ as } x \to 0.
\end{equation*}
By Lemma \ref{prop:corrasymptotic} the correlation function has the property
\begin{equation*}
  r(\tau) \sim \tau^{-\alpha}, \quad \text{ as } \tau \to \infty.
\end{equation*}
In this case, we can explicitly compute from \eqref{corrsupOU} that
\begin{equation*}
  r(\tau) = \int_0^{\infty} e^{-\tau x} \frac{1}{\Gamma(\alpha)} x^{\alpha-1} e^{-x} dx =  (1+\tau)^{-\alpha} \frac{1}{\Gamma(\alpha)} \int_0^{\infty} x^{\alpha-1} e^{- x} dx = (1+\tau)^{-\alpha}.
\end{equation*}
Note that for $\alpha \in (0,1]$ the correlation function exhibits long-range dependence, while for $\alpha>1$ short-range dependence.
\end{example}

\begin{example}\label{ex:pi:ML}
If $\pi$ is the Mittag-Leffler distribution, then the correlation function of the supOU process is
\begin{equation*}
r(\tau) = (1+\tau^{\alpha})^{-1}, \quad 0<\alpha<2.
\end{equation*}
The supOU process obtained in this way is long-range dependent for $\alpha \in (0,1]$ and short-range dependent for $\alpha \in (1,2)$.
\end{example}

\begin{example}\label{ex:pi:ML2}
Another long-range dependent example can be obtained with $r(\tau)=E_{\alpha} (-\tau^\gamma)$, $\gamma \in (0,1)$, $\alpha \in (0,1)$ where
\begin{equation*}
  E_\alpha(z) = \sum_{k=0}^{\infty} \frac{z^k}{\Gamma(\alpha k+1)}, \quad z \in \mathbb{C},
\end{equation*}
is the Mittag-Leffler function. In this case
\begin{equation*}
r(\tau) \sim \frac{\tau^{-\gamma}}{\Gamma(1-\alpha)}, \quad \text{ as } \tau \to \infty.
\end{equation*}
See \cite[Example 4]{barndorff2005spectral} for details.
\end{example}

In our study of intermittency we will be concerned with the cumulant properties of integrated and partial sum process of supOU process. Tractable expressions for cumulant functions in both cases are established in the following subsections.

\subsection{Integrated process}
Suppose $X$ is a supOU process defined in \eqref{supOU} and let $X^*=\{X^*(t), \, t \geq 0\}$ be the integrated process
\begin{equation}\label{integratedsupOU}
  X^*(t) = \int_0^t X(s) ds.
\end{equation}
For $a,b\in \R$, let
\begin{equation*}
  \varepsilon(a,b)= \frac{1}{b} \left( 1-e^{-ab}\right)
\end{equation*}
and recall that $\kappa_{X^*}(\zeta, t)$ and $\kappa_{X^*}^{(m)}(t)$ denote the cumulant function and the $m$-th order cumulant of $X^*(t)$, respectively.

\begin{theorem}[Theorem 4.1 in \cite{bn2001}]
The cumulant function $\kappa_{X^*}$ of $X^*(t)$ satisfies
\begin{equation}\label{kappaX*t}
  \kappa_{X^*}(\zeta,t) = \zeta \int_0^\infty  \int_0^t \kappa_X'\left(\varepsilon(s, \xi) \zeta\right) ds \, \pi(d\xi),
  \end{equation}
  where $\kappa_X(\zeta)$ is the cumulant function of $X(1)$.
\end{theorem}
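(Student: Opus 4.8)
The plan is to represent the integrated process $X^*(t)$ as a single stochastic integral against the L\'evy basis $\Lambda$ and then apply the integration rule \eqref{integrationrule}. Starting from \eqref{integratedsupOU} and \eqref{supOU}, I would interchange the deterministic time integral $\int_0^t\cdots du$ with the stochastic integral against $\Lambda(d\xi,ds)$ by a Fubini-type argument, obtaining
\begin{equation*}
X^*(t) = \int_{\R_+}\int_{\R} g_t(\xi,s)\, \Lambda(d\xi,ds), \qquad g_t(\xi,s)=\int_0^t e^{-\xi u+s}\mathbf{1}_{[0,\infty)}(\xi u-s)\,du.
\end{equation*}
The integration rule \eqref{integrationrule}, with control measure $M=\pi\times Leb$, then yields at once
\begin{equation*}
\kappa_{X^*}(\zeta,t)=\int_{\R_+}\int_{\R}\kappa_L\bigl(\zeta\, g_t(\xi,s)\bigr)\,ds\,\pi(d\xi).
\end{equation*}

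Next I would compute $g_t(\xi,s)$ explicitly. Since for $\xi>0$ the indicator equals $1$ precisely when $u\ge s/\xi$, a short case analysis gives $g_t(\xi,s)=e^s\varepsilon(t,\xi)$ for $s\le 0$, then $g_t(\xi,s)=\xi^{-1}(1-e^{-(\xi t-s)})$ for $0<s<\xi t$, and $g_t(\xi,s)=0$ for $s\ge \xi t$. The last range contributes nothing since $\kappa_L(0)=0$, so the inner $ds$-integral splits into two pieces. For the piece over $s\le 0$, the substitution $v=e^s$ turns it into $\int_0^1 \kappa_L(\zeta v\,\varepsilon(t,\xi))\,v^{-1}dv$, which by the defining relation \eqref{kappaXtoL} is exactly $\kappa_X(\zeta\,\varepsilon(t,\xi))$. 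For the piece over $0<s<\xi t$, the substitution $r=\xi t-s$ followed by $a=r/\xi$ (noting $\xi^{-1}(1-e^{-r})=\varepsilon(r/\xi,\xi)$) yields $\xi\int_0^t \kappa_L(\zeta\,\varepsilon(a,\xi))\,da$; applying \eqref{kappaLtoX} rewrites this as $\xi\zeta\int_0^t \varepsilon(a,\xi)\,\kappa_X'(\zeta\,\varepsilon(a,\xi))\,da$.

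To merge the two pieces into the single integral claimed in \eqref{kappaX*t}, I would recast the first piece in the same $\int_0^t\cdots da$ form. Using $\partial_a \varepsilon(a,\xi)=e^{-a\xi}$ and $\varepsilon(0,\xi)=0$, the fundamental theorem of calculus gives
\begin{equation*}
\kappa_X(\zeta\,\varepsilon(t,\xi))=\int_0^t \frac{d}{da}\kappa_X(\zeta\,\varepsilon(a,\xi))\,da=\zeta\int_0^t e^{-a\xi}\,\kappa_X'(\zeta\,\varepsilon(a,\xi))\,da.
\end{equation*}
Adding the two contributions and factoring out $\zeta\,\kappa_X'(\zeta\,\varepsilon(a,\xi))$ leaves the bracket $e^{-a\xi}+\xi\,\varepsilon(a,\xi)$, which by the definition of $\varepsilon$ equals $e^{-a\xi}+(1-e^{-a\xi})=1$. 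Hence the inner integral collapses to $\zeta\int_0^t \kappa_X'(\zeta\,\varepsilon(a,\xi))\,da$, and integrating against $\pi(d\xi)$ produces \eqref{kappaX*t}.

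The main obstacle is not the algebra — the identity $e^{-a\xi}+\xi\,\varepsilon(a,\xi)=1$ performs the decisive collapsing — but the two analytic justifications: the Fubini interchange establishing that $g_t$ is the correct $\Lambda$-integrable kernel for $X^*(t)$, and the legitimacy of differentiating $\kappa_X$ and of the fundamental theorem of calculus step inside the integral. The latter is supplied by the cited \cite[Corollary 1]{jurek2001remarks}, which guarantees that $\kappa_X$ is differentiable away from the origin and that \eqref{kappaLtoX} holds; integrability of $g_t$ should follow from that of the defining kernel of $X$ together with the finiteness of the bounded time integral $\int_0^t$.
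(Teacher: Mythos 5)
Your proof is correct. The paper itself gives no proof of this statement---it is imported as Theorem 4.1 of \cite{bn2001}---but your argument (Fubini-type interchange to identify the kernel $g_t$, the integration rule \eqref{integrationrule}, then \eqref{kappaXtoL} and \eqref{kappaLtoX} together with a fundamental-theorem-of-calculus step that collapses the two pieces via $e^{-a\xi}+\xi\,\varepsilon(a,\xi)=1$) is exactly the strategy the paper uses to prove the analogous partial-sum formula \eqref{kappa+}, so it matches the paper's approach in all essentials.
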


\begin{theorem}[Theorem 4.2 in \cite{bn2001}]\label{thm:integratedsupOU:cumulants}
Assume that $\kappa_X$ is analytic in a neighborhood of the origin. The cumulants of $X^*(t)$ are then given by
\begin{equation}\label{e:kIm}
  \kappa_{X^*}^{(m)}(t) = \kappa_X^{(m)} m I_{m-1}(t)
\end{equation}
where the $\kappa_X^{(m)}$ are the cumulants of $X(1)$,
\begin{equation}\label{Im-1}
  I_{m-1}(t) = \int_0^\infty \left( a_{m-1} + t \xi  + \sum_{k=1}^{m-1} (-1)^{k-1} {m-1 \choose k} \frac{1}{k} e^{-kt \xi} \right) \xi^{-m} \pi(d\xi)
\end{equation}
with
\begin{equation}\label{am-1}
  a_{m-1} = \sum_{k=1}^{m-1} (-1)^{k} {m-1 \choose k} \frac{1}{k}.
\end{equation}
\end{theorem}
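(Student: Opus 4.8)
The plan is to derive both \eqref{e:kIm} and \eqref{Im-1} by extracting power-series coefficients from the cumulant function \eqref{kappaX*t}, and then to evaluate the resulting $s$-integral in closed form. Since $\kappa_X$ is analytic near the origin, \eqref{e:cgf} gives $\kappa_X(\zeta)=\sum_{m\ge1}(i\zeta)^m\kappa_X^{(m)}/m!$, and differentiating term by term yields $\kappa_X'(\zeta)=\sum_{m\ge1}i\,(i\zeta)^{m-1}\kappa_X^{(m)}/(m-1)!$. Substituting $\zeta\mapsto\varepsilon(s,\xi)\zeta$ and multiplying by $\zeta$, I would write
\begin{equation*}
\zeta\,\kappa_X'\left(\varepsilon(s,\xi)\zeta\right)=\sum_{m=1}^\infty m\,\frac{(i\zeta)^m}{m!}\,\varepsilon(s,\xi)^{m-1}\kappa_X^{(m)},
\end{equation*}
where the factor $m$ appears precisely because $1/(m-1)!=m/m!$. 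Inserting this into \eqref{kappaX*t} and interchanging the summation with the integration in $ds\,\pi(d\xi)$ gives
\begin{equation*}
\kappa_{X^*}(\zeta,t)=\sum_{m=1}^\infty \frac{(i\zeta)^m}{m!}\,m\,\kappa_X^{(m)}\int_0^\infty\int_0^t \varepsilon(s,\xi)^{m-1}\,ds\,\pi(d\xi).
\end{equation*}
Comparing this with the defining expansion $\kappa_{X^*}(\zeta,t)=\sum_{m\ge1}(i\zeta)^m\kappa_{X^*}^{(m)}(t)/m!$ and invoking uniqueness of the coefficients would then yield \eqref{e:kIm} with $I_{m-1}(t)=\int_0^\infty\int_0^t \varepsilon(s,\xi)^{m-1}\,ds\,\pi(d\xi)$.

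The step I expect to require the most care is the justification of interchanging the infinite sum with the double integral. Here I would exploit the elementary bound $0\le \varepsilon(s,\xi)=\xi^{-1}(1-e^{-s\xi})\le \min\{s,\xi^{-1}\}\le t$, valid for $s\in[0,t]$ and uniform in $\xi$. If $R>0$ denotes the radius of convergence of the series for $\kappa_X$, then for $|\zeta|<R/t$ the $m$-th term is dominated by $|\kappa_X^{(m)}|\,(t|\zeta|)^{m-1}|\zeta|/(m-1)!$, a summable bound independent of $(s,\xi)$. Since $ds\,\pi(d\xi)$ is a finite measure on $[0,t]\times\R_+$ (of total mass $t$, as $\pi$ is a probability measure), Fubini's theorem applies and the interchange is legitimate for all sufficiently small $\zeta$; uniqueness of the power-series coefficients then transfers the identity to every $m\in\N$.

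It remains to evaluate the inner integral and match it to \eqref{Im-1}. Expanding $\varepsilon(s,\xi)^{m-1}=\xi^{-(m-1)}(1-e^{-s\xi})^{m-1}$ by the binomial theorem and integrating term by term over $s\in[0,t]$, the $k=0$ term contributes $t$, while each $k\ge1$ term contributes $(-1)^k\binom{m-1}{k}(k\xi)^{-1}(1-e^{-kt\xi})$, so that
\begin{equation*}
\int_0^t \varepsilon(s,\xi)^{m-1}\,ds=\xi^{-m}\left(a_{m-1}+t\xi+\sum_{k=1}^{m-1}(-1)^{k-1}\binom{m-1}{k}\frac{1}{k}e^{-kt\xi}\right),
\end{equation*}
where the $s$-independent parts of the $k\ge1$ terms collect into the coefficient $a_{m-1}$ of \eqref{am-1}, the $k=0$ contribution $t$ becomes $t\xi\cdot\xi^{-1}$, and the exponential parts reassemble with the sign flip $(-1)^k\mapsto(-1)^{k-1}$. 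Integrating against $\pi(d\xi)$ reproduces \eqref{Im-1} exactly. This final computation is routine; the only conceptual inputs are the convergence argument of the previous paragraph and the analyticity hypothesis, which guarantees that the cumulants $\kappa_X^{(m)}$ are well defined and that the expansion \eqref{e:cgf} is valid in a neighborhood of the origin.
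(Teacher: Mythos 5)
Your proof is correct: the term-by-term expansion $\zeta\,\kappa_X'(\varepsilon(s,\xi)\zeta)=\sum_{m\ge1} m\,\frac{(i\zeta)^m}{m!}\varepsilon(s,\xi)^{m-1}\kappa_X^{(m)}$, the Fubini justification via the uniform bound $\varepsilon(s,\xi)\le t$ together with the finiteness of $ds\,\pi(d\xi)$ on $[0,t]\times\R_+$, and the binomial evaluation of $\int_0^t\varepsilon(s,\xi)^{m-1}\,ds$ all check out and reproduce \eqref{e:kIm}, \eqref{Im-1} and \eqref{am-1} exactly. The paper itself does not prove this statement (it is imported as Theorem 4.2 of \cite{bn2001}), but your coefficient-identification argument starting from \eqref{kappaX*t} is precisely the technique the paper uses for the partial-sum analogue, Theorem \ref{thm:partialsumsupOU:cumulants}, so this is essentially the same approach rather than a genuinely different route.
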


The analyticity of the $\kappa_X$ in Theorem \ref{thm:integratedsupOU:cumulants} ensures the existence of all the cumulants of the marginal distribution of the underlying supOU process $X$. Note also that analyticity does not depend on the measure $\pi$ since the choice of $\pi$ does not affect the one-dimensional marginal distribution of $X$. The following is a useful criterion \cite[Theorem 7.2.1]{lukacs1970characteristic} for checking analyticity of the cumulant function.

\begin{lemma}\label{lemma:analiticity}
The characteristic and cumulant functions are analytic in a neighborhood of the origin if and only if there is a constant $C$ such that the corresponding distribution function $F$ satisfies
\begin{equation*}
  1-F(x)+F(-x) = O(e^{-u x}), \quad \text{ as } x\to \infty,
\end{equation*}
for all $0<u<C$.
\end{lemma}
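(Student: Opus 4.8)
The plan is to prove the two implications separately, after first reducing the statement about the cumulant function to one about the characteristic function. Write $\phi(\zeta)=\E e^{i\zeta X}$ for the characteristic function of the law $F$, so that $\kappa(\zeta)=\log\phi(\zeta)$. Since $\phi(0)=1\neq 0$ and $\phi$ is continuous, $\phi$ is bounded away from $0$ in some neighborhood of the origin, where $\log$ is analytic; hence $\kappa$ is analytic in a neighborhood of the origin if and only if $\phi$ is. It therefore suffices to characterize the analyticity of $\phi$ by the tail condition.

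For the direction asserting that the tail condition implies analyticity, I would first convert the exponential tail bound into finiteness of a moment generating function. If $1-F(x)+F(-x)=\P(|X|\ge x)=O(e^{-ux})$ for every $0<u<C$, then for each $0<s<C$, picking $s<u<C$ and using $\E e^{s|X|}=1+\int_0^\infty s\,e^{sx}\,\P(|X|\ge x)\,dx$, the integral converges, so $\E e^{s|X|}<\infty$ for all $0<s<C$. Consequently the absolute moments $\mu_n:=\E|X|^n$ satisfy $\mu_n\le n!\,s^{-n}\,\E e^{s|X|}$ (from $|x|^n\le n!\,s^{-n}e^{s|x|}$), whence $\sum_n (\mu_n/n!)\,\rho^n<\infty$ for every $\rho<C$. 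This bound shows that the Taylor series $\sum_n (i\zeta)^n \mu'_n/n!$, with $\mu'_n$ the ordinary moments satisfying $|\mu'_n|\le\mu_n$, converges for $|\zeta|<C$; a standard estimate of the Taylor remainder of $\phi$ (controlled by $\mu_n$) shows it represents $\phi$ there, so $\phi$ is analytic in a neighborhood of the origin.

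For the converse I would argue that analyticity of $\phi$ near the origin forces finiteness of the moment generating function, and then apply a Chernoff bound. The key classical fact (the \emph{ridge property} of analytic characteristic functions) is that if $\phi$ is analytic in a disc $|\zeta|<\rho$, then it extends analytically to the horizontal strip $\{|\Im\zeta|<\rho\}$ and coincides there with the bilateral Laplace transform, so that $M(t):=\E e^{tX}=\phi(-it)$ is finite for all $|t|<\rho$. Granting this, for $x>0$ and $0<t<\rho$ Markov's inequality gives $\P(X\ge x)\le e^{-tx}M(t)$ and $\P(X\le -x)\le e^{-tx}M(-t)$, so $1-F(x)+F(-x)\le (M(t)+M(-t))\,e^{-tx}=O(e^{-tx})$; since $t<\rho$ is arbitrary, this yields the claimed tail bound with $C=\rho$.

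The main obstacle is the ridge property invoked in the converse: deducing from mere analyticity in a neighborhood of the origin that $\phi$ extends to a full horizontal strip and equals the two-sided Laplace transform $\E e^{tX}$ there. This step is the substance of Lukacs's theorem and relies on the integral (positive-definite) structure of characteristic functions rather than on abstract complex analysis alone; once it is in place, both directions reduce to the elementary equivalence between exponential tail decay and finiteness of the moment generating function.
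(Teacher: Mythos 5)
Your proposal should first be set against what the paper actually does: the paper gives no proof of this lemma at all --- it is quoted verbatim as Theorem 7.2.1 of Lukacs's book on characteristic functions, and the citation is the ``proof.'' So the relevant question is whether your argument stands on its own. The reduction from the cumulant function to the characteristic function via $\kappa=\log\phi$ is fine, and your forward direction (exponential tails imply analyticity) is complete and correct: the Fubini identity $\E e^{s|X|}=1+\int_0^\infty s e^{sx}\,\P(|X|\ge x)\,dx$, the moment bound $\mu_n\le n!\,s^{-n}\,\E e^{s|X|}$, and the Taylor remainder estimate $\bigl|\phi(\zeta)-\sum_{k\le n}(i\zeta)^k\mu_k'/k!\bigr|\le \mu_{n+1}|\zeta|^{n+1}/(n+1)!$ together do represent $\phi$ by its convergent power series on $|\zeta|<C$.

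The converse, however, contains a genuine gap, and it is the one you flagged yourself: you assume that analyticity of $\phi$ on a disc $|\zeta|<\rho$ forces $\phi$ to extend to the strip $\{|\Im\zeta|<\rho\}$ and to coincide there with the bilateral Laplace transform, so that $\E e^{tX}=\phi(-it)<\infty$ for $|t|<\rho$. That strip-extension (ridge) property is not an auxiliary classical fact you may freely invoke here; it is the analytic core of the very theorem being proved (it is the companion theorem in the same chapter of Lukacs), so as written the converse is circular. The gap can be closed with elementary tools, avoiding the ridge property entirely. Analyticity makes $\phi$ infinitely differentiable at the origin, and the standard Fatou/difference-quotient argument shows that finiteness of $\phi^{(2k)}(0)$ forces $\E X^{2k}<\infty$ with $\E X^{2k}=(-1)^k\phi^{(2k)}(0)$. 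The Taylor coefficients of an analytic function with radius of convergence at least $\rho$ satisfy $\E X^{2k}\le C_r\,(2k)!\,r^{-2k}$ for every $r<\rho$, and the odd absolute moments follow by Cauchy--Schwarz, $\mu_{2k+1}\le\sqrt{\mu_{2k}\mu_{2k+2}}\le \sqrt{2}\,C_r\,(2k+1)!\,r^{-(2k+1)}$. Summing the series gives $\E e^{t|X|}<\infty$ for $|t|<\rho$, after which your Markov/Chernoff step yields $1-F(x)+F(-x)=O(e^{-tx})$ for every $t<\rho$, i.e.\ the tail condition with $C=\rho$. With that substitution your argument becomes a complete, self-contained proof of the lemma, which is more than the paper itself provides.
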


It follows from Lemma \ref{lemma:analiticity} that the cumulant function of $X(t)$ is analytic in the neighborhood of the origin if there exists $a>0$ such that $\E e^{a |X(t)|} <\infty$. This implies in particular that all the moments and cumulants of $X(t)$ exist. This condition is satisfied for many  selfdecomposable distributions.

\begin{example}\label{ex:X:IG}
 The \textit{inverse Gaussian distribution} $IG(\delta,\gamma)$, $\gamma >0$, $\delta >0$ with density
\begin{equation*}
  f_{IG(\delta,\gamma)} (x) = \frac{\delta}{\sqrt{2\pi}} e^{\delta \gamma} x^{-3/2} \exp \left\{ - \frac{1}{2} \left( \delta^2 x^{-1} + \gamma^2 x \right) \right\} \mathbf{1}_{(0,\infty)}(x)
\end{equation*}
is selfdecomposable and hence, for any choice of probability measure $\pi$, there exists a supOU process $X$ with $IG(\delta,\gamma)$ stationary distribution. Since exponential moments are finite, the cumulant generating function is analytic in a neighborhood of the origin and has the form
\begin{equation*}
\kappa_X (\zeta)= \delta \left( \gamma - \sqrt{\gamma^2-2i\zeta}\right).
\end{equation*}
\end{example}

\begin{example}\label{ex:X:NIG}
The \textit{normal inverse Gaussian distribution} $NIG(\alpha,\beta,\delta,\mu)$ with parameters $\alpha \geq \left| \beta \right|$,  $\delta >0$, $\mu \in \mathbb{R}$ is another example of selfdecomposable distribution. The density of $NIG(\alpha,\beta,\delta,\mu)$ distribution satisfies (see \cite{barndorff1997processes})
\begin{equation*}
  f_{NIG(\alpha,\beta,\delta,\mu)} (x) \sim C | x|^{-3/2} e^{-\alpha | x|+\beta x}, \quad \text{ as } x\to \pm \infty.
\end{equation*}
Hence, there is $a>0$ such that $\E e^{a |X(t)|} <\infty$,  the cumulant generating function is analytic in a neighborhood of the origin and has the form
\begin{equation*}
\kappa_X (\zeta)=i\mu \zeta +\delta \left( \sqrt{\alpha^{2}-\beta ^{2}}-\sqrt{\alpha ^{2}-\left( \beta +i\zeta \right) ^{2}}\right).
\end{equation*}
\end{example}

Other examples of supOU processes satisfying conditions of Theorem \ref{thm:integratedsupOU:cumulants} can be obtained by taking the marginal distribution to be gamma, variance gamma, tempered stable, Euler’s
gamma, or z-distribution. See \cite{barndorff2005spectral} and \cite{GLST2015} for more details. On the other hand, the Student's $t$-distribution $T(\nu,\delta,\mu)$, $\nu>0$, $\delta>0$, $\mu\in \R$ whose density is 
\begin{equation*}
f_{T(\nu,\delta,\mu)}(x) =  \frac{\Gamma \left(\frac{\nu+1}{2}\right)}{\delta \Gamma\left(\frac{1}{2}\right) \Gamma\left(\frac{\nu}{2}\right)}
\left(1+\left( \frac{x-\mu}{\delta} \right)^2 \right)^{-\frac{\nu+1}{2}}, \quad x \in \mathbb{R},
\end{equation*}
provides an example of a self-decomposable distribution for which the cumulant function is not analytic around the origin since $\E |X|^q = \infty$ for $q>\nu$ (see e.g.~\cite{heyde2005student}).

It is worth noting that one can obtain expressions for cumulants without assuming  analyticity. In fact, taking derivatives with respect to $\zeta$ in \eqref{kappaX*t} and letting $\zeta\to 0$, one recovers the formula \eqref{e:kIm}. This approach can be used to investigate cumulants and moments when they exists only up to some finite order, as in the case of Student's distribution. In this paper we assume analyticity in order not to complicate the exposition.

\subsection{Partial sum process}
In addition to the integrated process, we also consider partial sums of a discretely sampled supOU process. Let
\begin{equation}\label{partialsumsupOU}
  X^+(t) = \sum_{i=1}^{\lfloor t \rfloor} X(i)
\end{equation}
and define
\begin{equation}\label{e:eta-ab}
  \eta(a,b)=e^{-b} \frac{1-e^{-ab}}{1-e^{-b}}.
\end{equation}

\begin{theorem}
The cumulant function $\kappa_{X^+}$ of $X^+(t)$ satisfies
\begin{equation}\label{kappa+}
  \kappa_{X^+}(\zeta,t) = \int_{0}^\infty \left( \sum_{k=1}^{\lfloor t \rfloor} \bigg( \kappa_X \left( e^{\xi} \eta\left(k, \xi\right)  \zeta \right) -\kappa_X \big( \eta\left(k, \xi\right)  \zeta \big) \bigg) + \kappa_X \big( \eta\left(\lfloor t \rfloor, \xi\right) \zeta \big) \right)\pi(d \xi),
\end{equation}
where $\kappa_X(\zeta)$ is the cumulant function of $X(1)$.
\end{theorem}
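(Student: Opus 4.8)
The plan is to read off the partial sum as a linear functional of the sampled process and feed it into the joint cumulant formula. Since $X^+(t)=\sum_{i=1}^{\lfloor t\rfloor}X(i)$ by \eqref{partialsumsupOU}, writing $n:=\lfloor t\rfloor$ and substituting $t_j=j$, $\zeta_j=\zeta$ for $j=1,\dots,n$ into \eqref{cumfidissupOU} gives
\[
  \kappa_{X^+}(\zeta,t) = \int_{\R_+}\int_{\R} \kappa_L\!\left(\zeta\sum_{j=1}^{n}\mathbf{1}_{[0,\infty)}(\xi j-s)\,e^{s-\xi j}\right)ds\,\pi(d\xi).
\]
Everything then reduces to evaluating, for each fixed $\xi>0$, the inner $s$-integral and re-expressing it through $\kappa_X$ rather than $\kappa_L$, so as to match \eqref{kappa+}.

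First I would dissect the integrand as a function of $s$. The indicator $\mathbf{1}_{[0,\infty)}(\xi j-s)$ equals $1$ precisely when $s\le \xi j$, so the active index set is governed by which of the thresholds $\xi,2\xi,\dots,n\xi$ the variable $s$ lies below. This splits $\R$ into the half-line $s\le\xi$ (all indices active), the bands $k\xi<s\le(k+1)\xi$ for $k=1,\dots,n-1$ (indices $k+1,\dots,n$ active), and $s>n\xi$ (no active index, where $\kappa_L(0)=0$). On each piece the argument of $\kappa_L$ is $\zeta e^{s}c_k$ with the geometric sum $c_k=\sum_{j=k+1}^{n}e^{-\xi j}$; evaluating it and comparing with the definition \eqref{e:eta-ab} of $\eta$ yields $c_k=e^{-\xi k}\eta(n-k,\xi)$ for the bands and $c_0=\eta(n,\xi)$ for the half-line.

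The passage from $\kappa_L$ to $\kappa_X$ rests on relation \eqref{kappaXtoL}, from which I would extract the elementary identity
\[
  \int_0^{\xi}\kappa_L(e^{-u}w)\,du = \kappa_X(w)-\kappa_X(e^{-\xi}w),
\]
obtained by splitting $\int_0^{\xi}=\int_0^{\infty}-\int_{\xi}^{\infty}$ and shifting the tail integral by $\xi$. Applying it to the band $k\xi<s\le(k+1)\xi$ after the substitution $u=(k+1)\xi-s$ turns that band's contribution into $\kappa_X(\zeta e^{\xi}\eta(n-k,\xi))-\kappa_X(\zeta\eta(n-k,\xi))$, since the argument becomes $w=\zeta e^{(k+1)\xi}c_k=\zeta e^{\xi}\eta(n-k,\xi)$; the semi-infinite piece $s\le\xi$, via $u=\xi-s$ and the full relation \eqref{kappaXtoL}, contributes $\kappa_X(\zeta e^{\xi}\eta(n,\xi))$.

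Finally I would re-index the band sum by $k\mapsto n-k$ and reassemble. The half-line term then merges with the shifted first sum into $\sum_{k=1}^{n}\kappa_X(e^{\xi}\eta(k,\xi)\zeta)$, while the subtracted terms $-\sum_{k=1}^{n-1}\kappa_X(\eta(k,\xi)\zeta)$ are exactly what \eqref{kappa+} produces once its $\sum_{k=1}^{n}$ and the lone extra term $\kappa_X(\eta(\lfloor t\rfloor,\xi)\zeta)$ are combined. Integrating against $\pi(d\xi)$ then gives the claim. I expect the main obstacle to be purely organizational: keeping the geometric-sum indices, the band substitutions, and the distinct roles of the boundary pieces (the half-line yielding a full $\kappa_X$, the terminal band yielding zero) consistent, so that the re-indexing lands on precisely the stated form together with its single residual term $\kappa_X(\eta(\lfloor t\rfloor,\xi)\zeta)$.
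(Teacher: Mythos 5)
Your proof is correct: the active-index decomposition, the geometric sums $c_k=e^{-\xi k}\eta(n-k,\xi)$ and $c_0=\eta(n,\xi)$, the subtraction identity, and the final re-indexing all check out and reproduce \eqref{kappa+} exactly. It is, however, organized differently enough from the paper's proof to be worth comparing. The paper does not start from \eqref{cumfidissupOU}: it writes $X^+(t)$ as a single stochastic integral against $\Lambda$, splits the integration domain at $s=0$ and $s=\xi\lfloor t\rfloor$, and applies the integration rule \eqref{integrationrule}; your starting point (the joint cumulant function on the diagonal $\zeta_1=\cdots=\zeta_n=\zeta$) yields the same double integral, but the paper's form lets it justify finiteness of every piece by citing \cite[Proposition 2.6]{rajput1989spectral} — a point your write-up should still record in one sentence. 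Your partition of the $s$-axis at the thresholds $\xi,2\xi,\dots,n\xi$ (rather than the paper's $0,\xi,\dots,n\xi$) is slightly cleaner: the whole half-line $s\le\xi$ is resolved by a single application of \eqref{kappaXtoL} and directly produces $\kappa_X(e^{\xi}\eta(n,\xi)\zeta)$, whereas the paper gets $\kappa_X(\eta(n,\xi)\zeta)$ from $(-\infty,0]$ and keeps the band $[0,\xi)$ inside the main sum, which is why its formula carries the residual term $+\kappa_X(\eta(\lfloor t\rfloor,\xi)\zeta)$; as you verified, the two arrangements agree after cancellation. The most genuine methodological difference is the $\kappa_L\to\kappa_X$ conversion: you use only the additive identity $\int_0^{\xi}\kappa_L(e^{-u}w)\,du=\kappa_X(w)-\kappa_X(e^{-\xi}w)$, an immediate consequence of \eqref{kappaXtoL}, while the paper rescales to unit bands via $u=s/\xi$, invokes \eqref{kappaLtoX}, i.e. $\kappa_L(\zeta)=\zeta\kappa_X'(\zeta)$, and integrates an exact derivative in $s$. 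Your route is more elementary in that it never differentiates $\kappa_X$, so it bypasses the regularity input from \cite{jurek2001remarks} that underlies \eqref{kappaLtoX}; the paper's antiderivative trick buys little beyond keeping its computation parallel to the classical one for the integrated process, so on this step your argument is arguably the cleaner of the two.
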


\begin{proof}
From \eqref{supOU}
\begin{align*}
X^+(t) &= \sum_{i=1}^{\lfloor t \rfloor} \int_{\R_+} \int_{\R} e^{-\xi i + s} \mathbf{1}_{(s/\xi,\infty)}(i) \Lambda(d\xi,ds)\\
&= \int_{\R_+} \int_{-\infty }^{0} \left(\sum_{i=1}^{\lfloor t \rfloor} e^{-\xi i +s}\right) \Lambda(d\xi,ds) + \int_{\R_+} \int_{0 }^{\xi \lfloor t \rfloor} \left(\sum_{i=\lfloor s/\xi \rfloor +1}^{\lfloor t \rfloor} e^{-\xi i +s}\right) \Lambda(d\xi,ds)\\
&= \int_{\R_+} \int_{-\infty }^{0} e^s \eta\left(\lfloor t \rfloor , \xi\right) \Lambda(d\xi,ds) + \int_{\R_+} \int_{0 }^{\xi \lfloor t \rfloor} \left(\sum_{i=\lfloor s/\xi \rfloor +1}^{\lfloor t \rfloor} e^{-\xi i +s}\right)  \Lambda(d\xi,ds).
\end{align*}
Using \eqref{integrationrule} and then \eqref{kappaXtoL} we get
\begin{align}
&\ \qquad \qquad \qquad \qquad \qquad \qquad
\kappa_{X^+}(\zeta, t) 
\nonumber \\ &= \int_{0}^\infty \int_{-\infty }^{0} \kappa_L \bigg( e^s \eta\big(\lfloor t \rfloor, \xi\big) \zeta \bigg) ds \, \pi(d \xi) + \int_{0}^\infty \int_{0 }^{\xi \lfloor t \rfloor} \kappa_L \left( \sum_{i=\lfloor s/\xi \rfloor +1}^{\lfloor t \rfloor} e^{-\xi i +s}  \zeta \right) ds \, \pi(d \xi)\nonumber \\
&= \int_{0}^\infty \int_{0}^{\infty} \kappa_L \bigg( e^{-s} \eta\big(\lfloor t \rfloor , \xi\big) \zeta \bigg) ds \, \pi(d \xi) + \int_{0}^\infty \int_{0 }^{\lfloor t \rfloor} \xi \kappa_L \left( \sum_{i=\lfloor u \rfloor +1}^{\lfloor t \rfloor} e^{-\xi i + \xi u}  \zeta \right) du \, \pi(d \xi) \nonumber \\
&= \int_{0}^\infty \kappa_X \bigg( \eta\big(\lfloor t \rfloor, \xi\big) \zeta \bigg) \pi(d \xi) + \int_{0}^\infty \int_{0 }^{\lfloor t \rfloor} \xi \kappa_L \left( \sum_{i=\lfloor u \rfloor +1}^{\lfloor t \rfloor} e^{-\xi i + \xi u}  \zeta \right) du \, \pi(d \xi).\label{theoremkappa+proof}
\end{align}
Note that all integrals in \eqref{theoremkappa+proof} are finite because the cumulant function $\kappa_L$ is absolutely integrable with respect to the control measure, see \cite[Proposition 2.6]{rajput1989spectral}. For the second integral on the right, by computing the partial sum of the geometric sequence
\begin{equation*}
  \sum_{i=k +1}^{\lfloor t \rfloor} e^{-\xi i} = e^{-\xi (k+1)} \frac{1-e^{-\xi(\lfloor t \rfloor - k)}}{1-e^{-\xi}} = e^{-\xi k} \eta\big(\lfloor t \rfloor -k, \xi\big)
\end{equation*}
we have
$$
\int_{0}^\infty \int_{0 }^{\lfloor t \rfloor}
 \xi \kappa_L \left( \sum_{i=\lfloor u \rfloor +1}^{\lfloor t \rfloor} e^{-\xi i + \xi u}  \zeta \right) du \, \pi(d \xi)
 $$
 $$
  = \int_{0}^\infty \xi \sum_{k=0}^{\lfloor t \rfloor -1}  \int_{k}^{k+1} \kappa_L \left( e^{\xi u} \sum_{i=k +1}^{\lfloor t \rfloor} e^{-\xi i}  \zeta \right) du \, \pi(d \xi)
 $$
 $$
=\int_{0}^\infty \xi \sum_{k=0}^{\lfloor t \rfloor -1}  \int_{k}^{k+1} \kappa_L \bigg( e^{-\xi (k-u)} \eta\big(\lfloor t \rfloor -k, \xi\big)  \zeta \bigg) du \, \pi(d \xi).
$$
The change of variables $s=k-u+1$ and \eqref{kappaLtoX} yield
\begin{align*}
\int_{0}^\infty \int_{0 }^{\lfloor t \rfloor} & \xi \kappa_L \left( \sum_{i=\lfloor u \rfloor +1}^{\lfloor t \rfloor} e^{-\xi i + \xi u}  \zeta \right) du \, \pi(d \xi)\\
& = \int_{0}^\infty \xi \sum_{k=0}^{\lfloor t \rfloor -1}  \int_{0}^{1} \kappa_L \bigg( e^{-\xi (s-1)} \eta\big(\lfloor t \rfloor -k, \xi\big)  \zeta \bigg) ds \, \pi(d \xi)\\
&= \int_{0}^\infty \xi \sum_{k=0}^{\lfloor t \rfloor -1}  \int_{0}^{1} e^{-\xi (s-1)} \eta\big(\lfloor t \rfloor -k, \xi\big)  \zeta  \kappa_X' \bigg( e^{-\xi (s-1)} \eta\big(\lfloor t \rfloor -k, \xi\big)  \zeta \bigg) ds \, \pi(d \xi)\\
&= \int_{0}^\infty \xi \sum_{k=0}^{\lfloor t \rfloor -1}  \int_{0}^{1} \frac{d}{ds} \left[ - \frac{1}{\xi} \kappa_X \bigg( e^{-\xi (s-1)} \eta\big(\lfloor t \rfloor -k, \xi\big)  \zeta \bigg) \right] ds \, \pi(d \xi)\\
&= \int_{0}^\infty \sum_{k=0}^{\lfloor t \rfloor -1} \left( \kappa_X \bigg( e^{\xi} \eta\big(\lfloor t \rfloor -k, \xi\big)  \zeta \bigg) -\kappa_X \bigg( \eta\big(\lfloor t \rfloor -k, \xi\big)  \zeta \bigg) \right) \pi(d \xi)\\
&= \int_{0}^\infty \sum_{k=1}^{\lfloor t \rfloor} \left( \kappa_X \bigg( e^{\xi} \eta\big(k, \xi\big)  \zeta \bigg) -\kappa_X \bigg( \eta\big(k, \xi\big)  \zeta \bigg) \right) \pi(d \xi).
\end{align*}
Combining this with \eqref{theoremkappa+proof} yields \eqref{kappa+}.
\end{proof}

\begin{theorem}\label{thm:partialsumsupOU:cumulants}
Assume that the cumulant function $\kappa_X$ of $X(t)$ is analytic in a neighborhood of the origin. The cumulants of $X^+(t)$ are then given by
\begin{equation*}
  \kappa_{X^+}^{(m)}(t) = \kappa_X^{(m)} J_{m-1}(t)
\end{equation*}
where the $\kappa_X^{(m)}$ are the cumulants of $X(1)$ and
\begin{equation}\label{Jm-1}
\begin{aligned}
  J_{m-1}(t) &= \int_{0}^\infty \Bigg( \left(1 - e^{-m\xi}\right) \left(\lfloor t \rfloor - 1\right) + \left(1 - e^{-m\xi}\right) \sum_{j=1}^m {m \choose j} (-1)^j e^{-j\xi} \frac{1-e^{-j \left(\lfloor t \rfloor -1 \right)\xi}}{1-e^{-j \xi}} &\\
  & \qquad  + \left(1-e^{-\lfloor t \rfloor \xi}\right)^m \Bigg) \frac{1}{\left(1-e^{-\xi}\right)^m} \pi(d \xi).
\end{aligned}
\end{equation}
\end{theorem}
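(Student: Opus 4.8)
The plan is to read the $m$-th cumulant off the cumulant function \eqref{kappa+} and then reduce the resulting integrand to closed form. Since $\kappa_X$ is assumed analytic in a neighborhood of the origin, for any scalar $c$ the composition satisfies $\kappa_X(c\zeta)=\sum_{m\ge 1}\frac{(i\zeta)^m}{m!}c^m\kappa_X^{(m)}$ by \eqref{e:cgf}, so the coefficient of $(i\zeta)^m/m!$ in $\kappa_X(c\zeta)$ is $c^m\kappa_X^{(m)}$. Applying this to each of the three occurrences of $\kappa_X$ in \eqref{kappa+}, with multipliers $c=e^{\xi}\eta(k,\xi)$, $c=\eta(k,\xi)$, and $c=\eta(\lfloor t\rfloor,\xi)$, the coefficient of $(i\zeta)^m/m!$ in the integrand equals $\kappa_X^{(m)}$ times $\sum_{k=1}^{\lfloor t\rfloor}\big((e^{\xi}\eta(k,\xi))^m-\eta(k,\xi)^m\big)+\eta(\lfloor t\rfloor,\xi)^m$. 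Reading off $\kappa_{X^+}^{(m)}(t)=(-i)^m\frac{d^m}{d\zeta^m}\kappa_{X^+}(\zeta,t)|_{\zeta=0}$ then identifies $J_{m-1}(t)$ with the $\pi$-integral of this bracket, provided the $m$-fold differentiation may be carried inside $\int_0^\infty(\cdot)\,\pi(d\xi)$.

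To justify that interchange I would fix $t$ and note that the multipliers are uniformly bounded in $\xi$: writing $e^{\xi}\eta(k,\xi)=(1-e^{-k\xi})/(1-e^{-\xi})=\sum_{i=0}^{k-1}e^{-i\xi}$ from \eqref{e:eta-ab} shows $e^{\xi}\eta(k,\xi)\le k\le\lfloor t\rfloor$, while $\eta(k,\xi),\eta(\lfloor t\rfloor,\xi)\le 1$. Hence for $\zeta$ in a sufficiently small neighborhood of the origin every argument $c\zeta$ remains in a fixed compact subset of the analyticity domain of $\kappa_X$, on which $\kappa_X$ and its derivatives are bounded; since $\pi$ is a probability measure and the sum over $k$ is finite, dominated convergence permits differentiating under the integral sign, exactly as for the integrated process in Theorem \ref{thm:integratedsupOU:cumulants}.

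It then remains to reduce the bracket to \eqref{Jm-1}. Using $\eta(k,\xi)=e^{-\xi}(1-e^{-k\xi})/(1-e^{-\xi})$ I would combine the two sum terms as $(e^{\xi}\eta(k,\xi))^m-\eta(k,\xi)^m=\eta(k,\xi)^m(e^{m\xi}-1)=(1-e^{-m\xi})(1-e^{-k\xi})^m/(1-e^{-\xi})^m$, split off the $k=\lfloor t\rfloor$ summand, and add it to the isolated term $\eta(\lfloor t\rfloor,\xi)^m=e^{-m\xi}(1-e^{-\lfloor t\rfloor\xi})^m/(1-e^{-\xi})^m$, whereupon the coefficient of $(1-e^{-\lfloor t\rfloor\xi})^m/(1-e^{-\xi})^m$ collapses via $(1-e^{-m\xi})+e^{-m\xi}=1$ to yield the last term of \eqref{Jm-1}. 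Expanding the remaining $\sum_{k=1}^{\lfloor t\rfloor-1}(1-e^{-k\xi})^m$ by the binomial theorem and summing the geometric series in $k$ produces $(\lfloor t\rfloor-1)+\sum_{j=1}^m{m\choose j}(-1)^je^{-j\xi}\frac{1-e^{-j(\lfloor t\rfloor-1)\xi}}{1-e^{-j\xi}}$, and multiplying through by $(1-e^{-m\xi})/(1-e^{-\xi})^m$ gives the first two terms of \eqref{Jm-1}. The conceptual content is light, so the only genuine care lies in the interchange above; the one delicate algebraic point is the merge of the $k=\lfloor t\rfloor$ term with the separate $\eta(\lfloor t\rfloor,\xi)^m$ contribution through the identity $(1-e^{-m\xi})+e^{-m\xi}=1$, and keeping that bookkeeping straight is the step most prone to error.
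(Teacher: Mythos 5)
Your proposal is correct and takes essentially the same route as the paper's proof: expand $\kappa_X$ via \eqref{e:cgf} inside \eqref{kappa+}, read off the coefficient of $(i\zeta)^m/m!$, and carry out exactly the reduction you describe (merging the $k=\lfloor t\rfloor$ summand with $\eta(\lfloor t\rfloor,\xi)^m$ via $(1-e^{-m\xi})+e^{-m\xi}=1$, then binomial expansion and geometric summation in $k$); the paper simply identifies coefficients in the expansion without spelling out the sum--integral interchange that you justify explicitly. One small correction: your claim $\eta(k,\xi)\le 1$ is false, since $\eta(k,\xi)\to k$ as $\xi\to 0$; however, your own identity $e^{\xi}\eta(k,\xi)=\sum_{i=0}^{k-1}e^{-i\xi}$ gives $\eta(k,\xi)\le e^{\xi}\eta(k,\xi)\le k\le\lfloor t\rfloor$, which is all the uniform boundedness your dominated-convergence argument needs, so the justification stands.
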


\begin{proof}
Using \eqref{e:cgf} and \eqref{kappa+}, expand the cumulant function of $X$ to get
\begin{align*}
 &\ \qquad \qquad \qquad \qquad \qquad \qquad
   \kappa_{X^+}(\zeta,t) \\
    &= \int_{0}^\infty \left( \sum_{k=1}^{\lfloor t \rfloor} \bigg( \sum_{m=1}^\infty \kappa_X^{(m)} \frac{\left(i \zeta \eta\big(k,\xi \big) \right)^{m}}{m!} \left(e^{m\xi} -1 \right)\bigg) + \sum_{m=1}^\infty \kappa_X^{(m)} \frac{\left(i \zeta \eta\big(\lfloor t \rfloor,\xi \big) \right)^{m}}{m!} \right)\pi(d \xi)\\
  &= \sum_{m=1}^\infty \kappa_X^{(m)} \frac{\left(i \zeta \right)^{m}}{m!} \int_{0}^\infty \left( \sum_{k=1}^{\lfloor t \rfloor} \eta\big(k,\xi \big)^m \left(e^{m\xi} -1 \right) + \eta\big(\lfloor t \rfloor,\xi \big)^m \right) \pi(d \xi)
\end{align*}
and by identifying the coefficients in the expansion, we get $\kappa_{X^+}^{(m)}(t)=\kappa_X^{(m)} J_{m-1}(t)$, where
\begin{equation*}
  J_{m-1}(t) = \int_{0}^\infty \left( \sum_{k=1}^{\lfloor t \rfloor} \eta\big(k,\xi \big)^m \left(e^{m\xi} -1 \right) + \eta\big(\lfloor t \rfloor,\xi \big)^m \right) \pi(d \xi).
\end{equation*}
Use \eqref{e:eta-ab} to get
\begin{align}
 & \qquad \qquad \qquad \qquad \qquad \qquad J_{m-1}(t) \nonumber\\
   &= \int_{0}^\infty \left( \sum_{k=1}^{\lfloor t \rfloor} e^{-m \xi} \frac{\left(1-e^{-k\xi}\right)^m}{\left(1-e^{-\xi}\right)^m}  \left(e^{m\xi} -1 \right) + e^{-m \xi} \frac{\left(1-e^{-\lfloor t \rfloor \xi}\right)^m}{\left(1-e^{-\xi}\right)^m}  \right) \pi(d \xi) \nonumber \\
  &= \int_{0}^\infty \frac{1}{\left(1-e^{-\xi}\right)^m} \left( \left(1 - e^{-m\xi}\right) \sum_{k=1}^{\lfloor t \rfloor - 1} \left(1-e^{-k\xi}\right)^m + \left(1-e^{-\lfloor t \rfloor \xi}\right)^m \right) \pi(d \xi) \label{Jm-1-proof}\\
  &= \int_{0}^\infty \left( \left(1 - e^{-m\xi}\right) \sum_{k=1}^{\lfloor t \rfloor - 1} \sum_{j=0}^m {m \choose j} (-1)^j e^{-jk\xi} + \left(1-e^{-\lfloor t \rfloor \xi}\right)^m \right) \frac{1}{\left(1-e^{-\xi}\right)^m} \pi(d \xi) \nonumber \\
  &= \int_{0}^\infty \Bigg( \left(1 - e^{-m\xi}\right) \left(\lfloor t \rfloor - 1\right) + \left(1 - e^{-m\xi}\right) \sum_{j=1}^m {m \choose j} (-1)^j e^{-j\xi} \frac{1-e^{-j \left(\lfloor t \rfloor -1 \right)\xi}}{1-e^{-j \xi}} \nonumber \\
  & \qquad  + \left(1-e^{-\lfloor t \rfloor \xi}\right)^m \Bigg) \frac{1}{\left(1-e^{-\xi}\right)^m} \pi(d \xi). \nonumber
\end{align}
\end{proof}

\section{Intermittency of integrated and partial sum process}\label{sec4}

In this section we establish asymptotic properties of cumulants and moments of the integrated supOU process $X^*$ defined in \eqref{integratedsupOU} and the partial sum process $X^+$ defined in \eqref{partialsumsupOU}. The underlying supOU process will be assumed to have a power law decay of the correlation function, which can be achieved with the appropriate choice of the probability measure $\pi$, as given by Proposition \ref{prop:corrasymptotic}. In the case of long-range dependence, we will show that both variants of cumulative processes can be intermittent. Before doing that, we provide examples where asymptotic normality easily follows.

\begin{example}\label{ex:finitesuplimit}
Consider a supOU process from Example \ref{exa:discretesup} such that $\pi$ is a discrete probability measure with finite support $\{\lambda_k : k=1,\dots K\}$ and $\pi(\{\lambda_k\})=p_k$. In this case, supOU process has the same distribution as the finite superposition $X=\{X(t), \, t \in \R\}$ defined by
\begin{equation*}
  X(t)=\sum_{k=1}^K X^{(k)}(t),
\end{equation*}
where $\{X^{(k)}(t), \, t \in \R\}$, $k = 1,\dots,K$ are independent standard OU type processes corresponding to parameter $\lambda_k$ and BDLP with cumulant function $p_k \kappa_L$, $k = 1,\dots,K$. Suppose $\E |X(1)|^{2+\delta} < \infty$ for some $\delta>0$ and let $\{S(t), \, t \geq 0\}$ denote the centered partial sum process
\begin{equation*}
  S(t) = \sum_{i=1}^{\lfloor t \rfloor} \left( X(i) - \E X(i) \right).
\end{equation*}
Each OU type process $\{X^{(k)}(t), \, t \in \R\}$, $k = 1,\dots,K$ satisfies the strong mixing property with an exponentially decaying rate of mixing coefficients (\cite{masuda2004}), and so does a sequence $X(i)$, $i \in \N$ as a finite sum of these processes. Application of the invariance principle for strong mixing sequences (\cite{davydov1968convergence}; see also \cite{oodaira1972functional}) shows that
\begin{equation*}
  \frac{S(nt)}{\sigma \sqrt{n}} \Rightarrow B(t), \quad t \in [0,1],
\end{equation*}
as $n \to \infty$, where $\{B(t), \, t \in [0,1]\}$ is a Brownian motion, $\sigma$ positive constant and the convergence is weak convergence in Skorokhod space $D[0,1]$. In particular, \eqref{thm:limitdistr} holds with $Y$ being the partial sum process and for every $t\in [0,1]$
\begin{equation*}
  \frac{S(nt)}{\sigma \sqrt{n}} \overset{d}{\to} \mathcal{N}(0,t),
\end{equation*}
as $n \to \infty$. If $q>2$ is such that $\E |X(1)|^{q} < \infty$, then by the result of \cite{yokoyama1980moment}, $q$-th absolute moment of $S(nt)/(\sigma \sqrt{n})$ converges to that of $\mathcal{N}(0,t)$. Then by Theorem \ref{theorem:limit} the scaling function of the partial sum process $S(t)$ is $\tau_S(q)=q/2$, and there is no intermittency.
\end{example}

\begin{example}\label{ex:gaussian}
Let $\{X(t), \, t\ge 0\}$ be a \textit{Gaussian supOU process}, that is a supOU process with the generating quadruple $(0,\sigma^2,0,\pi)$ where $\sigma^2>0$ and $\pi$ is a probability measure. One can check from \eqref{cumfidissupOU} that $X$ is indeed a Gaussian process with zero mean. Suppose further that $\pi$ satisfies \eqref{regvarofpi} for some $\alpha>0$ so that the correlation function satisfies \eqref{e:cdep}. Let  $X^+(t) = \sum_{i=1}^{\lfloor t \rfloor} X(i)$ be the corresponding partial sum process.

When $\alpha <1$, long-range dependence is present, and from \cite[Lemma 5.1]{Taqqu1975}, the normalized partial sum process
\begin{equation*}
\frac {1}{n^H \sqrt{L(n)}}X^+(nt)
\end{equation*}
with $H=1-\alpha /2$, converges in Skorokhod space $D[0,1]$ to a process that is fractional Brownian motion with Hurst parameter $H$ up to a multiplicative constant. The partial sum $X^+(t)$ is a mean zero Gaussian random variable with the variance satisfying $\E \left( X^+(t)\right)^2 \sim C \lfloor t \rfloor^{2H} L \left( \lfloor t \rfloor \right)$ (see the proof of \cite[Lemma 5.1]{Taqqu1975}). Since the $q$-th absolute moment of a Gaussian distribution is proportional to the $q$-th power of the standard deviation, it follows that $\tau_{X^+}(q)=Hq$, and there is no intermittency.

If $\alpha >1$, then the variance of $X^+(t)$ is of the order $t^{1/2}$, and the finite-dimensional distributions of
\begin{equation*}
\frac {1}{n^{1/2}} X^+(nt)
\end{equation*}
converge to those of the Brownian motion, see \cite[Theorem 2.3.1]{LeonenkoIvanov}. In the case $\alpha=1$, the limit is also Gaussian with an extra factor of a slowly varying function in the variance and in the normalizing sequence of the partial sum, see \cite[Theorem 2.3.2]{LeonenkoIvanov}. The same argument as in the case $\alpha <1$ shows that the scaling function is $\tau_{X^+}(q)=q/2$, and there is no intermittency.
\end{example}

To show that integrated supOU process $X^*(t)=\int _0^t X(s)ds$ can be intermittent, we first establish the form of the cumulant based scaling function $\sigma_{X^*}(m)$ defined in \eqref{defsigma}. Recall that $\kappa_X^{(m)}$ denotes the $m$-th cumulant of $X(t)$. In particular, $\kappa_X^{(1)}=\E X(t)$.

\begin{lemma}\label{thm:intermittency-integrated}
Suppose that the stationary supOU process $X$ defined in \eqref{supOU} satisfies the conditions of Proposition \ref{prop:corrasymptotic} and satisfies \eqref{regvarofpi} with some $\alpha>0$. Further, suppose that  $\kappa_X$ is analytic in a neighborhood of the origin and let $\sigma_{X^*}$ be the cumulant based scaling function \eqref{defsigma} of the integrated process $\{X^*(t), \, t \geq 0\}$. If the mean $\kappa_X^{(1)}\neq 0$, then
\begin{equation*}
\sigma_{X^*}(1)=1.
\end{equation*}
For every $m>\alpha+1$ such that $\kappa_X^{(m)}\neq 0$, we have
\begin{equation*}
  \sigma_{X^*}(m) = m - \alpha.
\end{equation*}
\end{lemma}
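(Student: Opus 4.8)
The plan is to extract the asymptotics of the cumulants $\kappa_{X^*}^{(m)}(t)$ directly from the exact formula \eqref{e:kIm}--\eqref{Im-1}, reducing everything to the growth rate of the integral $I_{m-1}(t)$ as $t\to\infty$. Since $\kappa_{X^*}^{(m)}(t)=\kappa_X^{(m)}\, m\, I_{m-1}(t)$ and $\kappa_X^{(m)}\neq 0$ is assumed, we have $\sigma_{X^*}(m)=\lim_{t\to\infty}\log|I_{m-1}(t)|/\log t$, so the whole problem is the asymptotic order of $I_{m-1}(t)$. The case $m=1$ is immediate: by \eqref{Im-1}, $a_0$ is an empty sum (hence $0$), the inner $k$-sum is empty, and $I_0(t)=\int_0^\infty t\xi\cdot\xi^{-1}\pi(d\xi)=t$, giving $\kappa_{X^*}^{(1)}(t)=\kappa_X^{(1)}t$ and $\sigma_{X^*}(1)=1$ whenever $\kappa_X^{(1)}\neq0$. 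This matches the intuition that the integrated mean grows linearly.

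For $m>\alpha+1$ the core task is to show $I_{m-1}(t)\asymp t^{m-\alpha}$ up to slowly varying factors. First I would isolate the three kinds of terms inside the integrand of \eqref{Im-1}: the constant $a_{m-1}$, the linear-in-$t$ term $t\xi$, and the exponentially damped terms $e^{-kt\xi}$, all weighted by $\xi^{-m}\pi(d\xi)$. The integrability near the origin is the delicate point, since $\xi^{-m}$ blows up and $\pi$ has regularly varying mass near $0$ by \eqref{regvarofpi}. The key observation is that the bracket $a_{m-1}+t\xi+\sum_{k=1}^{m-1}(-1)^{k-1}\binom{m-1}{k}\frac1k e^{-kt\xi}$ is constructed precisely so that it vanishes to high order as $\xi\to0$: Taylor-expanding each $e^{-kt\xi}$ in powers of $t\xi$, the constant terms sum to $\sum_k(-1)^{k-1}\binom{m-1}{k}\frac1k=-a_{m-1}$, cancelling $a_{m-1}$, and the linear terms cancel the $t\xi$ contribution (this is a standard binomial identity, $\sum_{k=1}^{m-1}(-1)^{k-1}\binom{m-1}{k}=1$). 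Hence the bracket behaves like $O((t\xi)^2)$ for small $t\xi$ and like $t\xi$ (linearly) for large $t\xi$, which is exactly what makes $I_{m-1}(t)$ finite for $m>\alpha+1$ and what produces the scaling exponent.

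With this cancellation in hand, the plan is to split the integral at $\xi=1/t$ (the natural scale at which $t\xi\sim1$). On the region $\xi>1/t$ the damped exponentials are negligible and the dominant contribution is $\int_{1/t}^\infty t\xi\cdot\xi^{-m}\pi(d\xi)=t\int_{1/t}^\infty\xi^{1-m}\pi(d\xi)$; using \eqref{regvarofpi} and Karamata's theorem (as in \cite[Theorem 1.7.1']{bingham1989regular}, cf.\ Proposition \ref{prop:corrasymptotic}), this integral is regularly varying of index $-(m-1-\alpha)$ in the variable $1/t$, yielding order $t\cdot t^{m-1-\alpha}=t^{m-\alpha}$ up to a slowly varying factor. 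On the region $\xi<1/t$ the quadratic behavior of the bracket gives $\int_0^{1/t}(t\xi)^2\xi^{-m}\pi(d\xi)=t^2\int_0^{1/t}\xi^{2-m}\pi(d\xi)$, which by Karamata is again of order $t^{m-\alpha}$ (the two regions contribute at the same order, confirming $1/t$ is the correct breakpoint). Taking logarithms and dividing by $\log t$ kills the slowly varying factor and gives $\sigma_{X^*}(m)=m-\alpha$.

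The main obstacle I anticipate is making the Karamata/Tauberian step fully rigorous with the measure $\pi$ rather than a density: one must justify applying the regular-variation machinery to $\int_0^{x}\xi^{\beta}\pi(d\xi)$ and $\int_x^\infty\xi^{-\gamma}\pi(d\xi)$ given only \eqref{regvarofpi}, and control the error terms from the Taylor expansion of the exponentials uniformly enough to conclude that the two regions do not cancel but genuinely add at order $t^{m-\alpha}$. A secondary technical point is verifying that no slowly varying factor can shift the limit of $\log|I_{m-1}(t)|/\log t$, which follows from $\log L(t)/\log t\to0$ but should be stated. The binomial cancellation identities, while elementary, are the conceptual heart that explains why the exponent is $m-\alpha$ and why the threshold $m>\alpha+1$ is exactly the condition for the leading linear term $t\xi$ to be $\pi$-integrable against $\xi^{-m}$ near the origin after the cancellation.
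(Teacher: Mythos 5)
Your reduction to the growth of $I_{m-1}(t)$, the $m=1$ computation, and the identification of the constant- and linear-term cancellations are all correct, but the argument has a genuine gap at the step where you handle the region $\xi<1/t$: the bound ``bracket $=O((t\xi)^2)$'' is far too weak. With only quadratic vanishing, your small-$\xi$ contribution $t^2\int_0^{1/t}\xi^{2-m}\pi(d\xi)$ is not merely of the wrong order --- it is \emph{infinite} whenever $m\geq\alpha+2$, because \eqref{regvarofpi} gives $\pi\left((0,x]\right)\sim L(x^{-1})x^{\alpha}$, so $\xi^{2-m}$ fails to be $\pi$-integrable near the origin as soon as $m-2\geq\alpha$. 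Since the lemma must cover every $m>\alpha+1$ (and its application in Theorem \ref{cor:intermittency-integrated} uses even integers $m>2\alpha$, which are typically $\geq\alpha+2$), the proof as written breaks down for essentially all of the relevant values of $m$. The missing idea is that the cancellation you started does not stop at the linear term: \emph{all} Taylor coefficients of the bracket up to order $m-1$ vanish, because the bracket in \eqref{Im-1} has the closed form
\begin{equation*}
a_{m-1}+t\xi+\sum_{k=1}^{m-1}(-1)^{k-1}{m-1 \choose k}\frac{1}{k}e^{-kt\xi}
=\int_0^{\xi t}\left(1-e^{-w}\right)^{m-1}dw ,
\end{equation*}
which is nonnegative, behaves like $(t\xi)^m/m$ as $t\xi\to 0$, and like $t\xi$ as $t\xi\to\infty$. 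Nonnegativity also disposes of your worry that the two regions might cancel rather than add. If you replace your quadratic bound by the two-sided bound $c\min\{(t\xi)^m,t\xi\}\leq \mathrm{bracket}\leq C\min\{(t\xi)^m,t\xi\}$, then your split at $\xi=1/t$ plus Karamata does produce matching upper and lower bounds of order $L(t)t^{m-\alpha}$ (the small-$\xi$ piece becomes $\frac{t^m}{m}\pi\left((0,1/t]\right)\sim\frac{1}{m}L(t)t^{m-\alpha}$), which suffices for the limit \eqref{defsigma}.

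For comparison, the paper avoids the splitting entirely: it uses exactly the closed form above, exchanges the order of integration to get $I_{m-1}(t)=\int_0^\infty(1-e^{-w})^{m-1}\int_{w/t}^{\infty}\xi^{-m}\pi(d\xi)\,dw$, transports $\pi$ to a measure $\widetilde{\pi}$ regularly varying at infinity, applies Karamata's theorem for Lebesgue--Stieltjes integrals to write $\int_{w/t}^{\infty}\xi^{-m}\pi(d\xi)=\frac{\alpha}{m-\alpha}L_1(t/w)(t/w)^{m-\alpha}$, and then invokes \cite[Proposition 4.1.2]{bingham1989regular} --- this is precisely where the hypothesis $m>\alpha+1$ enters, to guarantee integrability of $(1-e^{-1/z})^{m-1}z^{m-\alpha-2-\delta}$ near $z=0$ --- to conclude that the remaining integral is slowly varying in $t$. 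That route yields the sharper asymptotic equivalence $I_{m-1}(t)\sim\frac{\alpha}{m-\alpha}L_1(t)t^{m-\alpha}\int_0^\infty(1-e^{-1/z})^{m-1}z^{m-\alpha-2}dz$, not just the logarithmic exponent; your strategy, once repaired with the order-$m$ vanishing, is a legitimate and somewhat more elementary alternative, but it only delivers the exponent.
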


\begin{proof}
By Theorem \ref{thm:integratedsupOU:cumulants} we have that
\begin{equation}\label{proof:thm:intermittency-integrated:1}
  \sigma_{X^*}(m) = \lim_{t\to \infty} \frac{\log \left| \kappa_{X^*}^{(m)}(t) \right|}{\log t} = \lim_{t\to \infty} \frac{\log \left| \kappa_X^{(m)} m I_{m-1}(t) \right|}{\log t} = \lim_{t\to \infty} \frac{\log \left| I_{m-1}(t) \right|}{\log t}.
\end{equation}
From the expression \eqref{Im-1} for $I_{m-1}(t)$ we obtain the following form
\begin{align}
  I_{m-1}(t) &= \int_0^\infty \left( a_{m-1} + t \xi  + \sum_{k=1}^{m-1} (-1)^{k-1} {m-1 \choose k} \frac{1}{k} e^{-kt \xi} \right) \xi^{-m} \pi(d\xi) \label{proof:thm:intermittency-integrated:1a}\\
  &= \int_0^\infty \int_0^{\xi t} \left( 1 + \sum_{k=1}^{m-1} (-1)^{k} {m-1 \choose k}  e^{-kw} \right) dw \xi^{-m} \pi(d\xi), \nonumber
\end{align}
since
\begin{equation*}
	\int _0^{\xi t}e^{-k\omega }d\omega =-\frac{1}{k}\left(e^{-kt\xi }-1\right).
\end{equation*}
Hence
\begin{align*}
  I_{m-1}(t) &= \int_0^\infty \int_0^{\xi t} \left(1- e^{-w} \right)^{m-1} dw \xi^{-m} \pi(d\xi)\\
  &= \int_0^\infty \left(1- e^{-w} \right)^{m-1} \int_{w/t}^{\infty} \xi^{-m} \pi(d\xi) dw.
\end{align*}
First, if $m=1$ then \eqref{proof:thm:intermittency-integrated:1a} implies $I_0(t)=\int_0^\infty t \pi(d\xi)=t$ since $\pi$ is a probability measure. Hence \eqref{proof:thm:intermittency-integrated:1} yields $\sigma_{X^*}(1)=1$.

Now suppose $m\geq 2$. Since $\pi \left( (0,x] \right) \sim L(x^{-1}) x^{\alpha}$ as $x \to 0$, by putting $\widetilde{\pi}=\pi \circ g$ with $g(\xi)=1/\xi$ we obtain a probability measure that is regularly varying at infinity, more precisely
\begin{equation*}
  \widetilde{\pi} \left( (u,\infty) \right) \sim L(u) u^{-\alpha}, \quad \text{ as } u \to \infty.
\end{equation*}
A variant of Karamata's theorem for Lebesgue-Stieltjes integrals \cite[Theorem VIII.9.2.]{feller1971theory} gives
\begin{equation}\label{proof:thm:intermittency-integrated:21}
  \int_{0}^{t} u^{m} \widetilde{\pi}(du) \sim \frac{\alpha}{m-\alpha} L(t) t^{m-\alpha}, \quad \text{ as } t \to \infty.
\end{equation}
This result can be understood heuristically by supposing that $\widetilde{\pi}$ has a density $\widetilde{\pi} \left( du \right) \sim L(u) \alpha u^{-\alpha-1}du $ as $u \to \infty$. Since the integral $\int_{0}^{t} u^{m} \widetilde{\pi}(du)$ is regularly varying function at infinity in $t$, it can be written in the form
\begin{equation}\label{proof:thm:intermittency-integrated:2}
  \int_{0}^{t} u^{m} \widetilde{\pi}(du) = \frac{\alpha}{m-\alpha} L_1(t) t^{m-\alpha},
\end{equation}
with $L_1$ slowly varying at infinity such that $L_1(t) \sim L(t)$ as $t \to \infty$. Now by the change of variables $u=1/\xi$
\begin{equation*}
  \int_{w/t}^{\infty} \xi^{-m} \pi(d\xi) = \int_{0}^{t/w} u^{m} \widetilde{\pi}(du) = \frac{\alpha}{m-\alpha} L_1(t/w) (t/w)^{m-\alpha},
\end{equation*}
and so
\begin{align}
  I_{m-1}(t) &= \frac{\alpha}{m-\alpha} t^{m-\alpha} \int_0^\infty L_1(t/w)\left(1- e^{-w} \right)^{m-1} w^{\alpha-m} dw \nonumber\\
  &= \frac{\alpha}{m-\alpha} t^{m-\alpha} \int_0^\infty L_1(t z) \left(1- e^{-\frac{1}{z}} \right)^{m-1} z^{m-\alpha-2} dz \label{e:Im3}.
\end{align}
To show that the integral on the right varies slowly in $t$, we split it into two parts and use \cite[Proposition 4.1.2]{bingham1989regular}. The function $(1-e^{-1/z})\sim z^{-1}$ as $z\to \infty$ and hence
\begin{equation*}
  f(z)=\left(1- e^{-\frac{1}{z}} \right)^{m-1} z^{m-\alpha-2}
\end{equation*}
is regularly varying at infinity with index $-\alpha-1$ and regularly varying at zero with index $m-\alpha-2$. Due to the assumption $m>\alpha+1$, we can choose $0<\delta<m-\alpha-1$ and
\begin{equation}\label{proof:thm:intermittency-integrated:delta0}
  \int_0^1 z^{-\delta} f(z) dz < \infty.
\end{equation}
From \eqref{proof:thm:intermittency-integrated:2} we have that
\begin{equation*}
  L_1(t) = \frac{m-\alpha}{\alpha} t^{\alpha-m} \int_0^t u^m \widetilde{\pi}(du) \leq \frac{m-\alpha}{\alpha} t^{\alpha},
\end{equation*}
since $\widetilde{\pi}$ is a probability measure. Hence $t^{\delta}L_1(t)$ is locally bounded on $[0,\infty)$. By applying \cite[Proposition 4.1.2(a)]{bingham1989regular} it follows that
\begin{equation*}
  \int_0^1 L_1(t z) f(z) dz  \sim L_1(t) \int_0^1 f(z) dz, \quad \text{ as } t \to \infty.
\end{equation*}
On the other hand, for $0<\delta<\alpha$
\begin{equation*}
  \int_1^\infty z^{\delta} f(z) dz < \infty
\end{equation*}
and by application of \cite[Proposition 4.1.2(b)]{bingham1989regular} we obtain
\begin{equation*}
  \int_1^\infty L_1(t z) f(z) dz  \sim L_1(t) \int_1^\infty f(z) dz, \quad \text{ as } t \to \infty.
\end{equation*}
Going back to \eqref{e:Im3}, we have
\begin{equation*}
  I_{m-1}(t) \sim \frac{\alpha}{m-\alpha} t^{m-\alpha} L_1(t) \int_0^\infty \left(1- e^{-\frac{1}{z}} \right)^{m-1} z^{m-\alpha-2} dz
\end{equation*}
and from \eqref{proof:thm:intermittency-integrated:1} we get
\begin{equation}\label{proof:thm:intermittency-integrated:3}
  \sigma_{X^*}(m) =\lim_{t\to \infty} \frac{\log \left| I_{m-1}(t) \right|}{\log t}= m-\alpha
\end{equation}
since due to slow variation of $L_1$, $\log L_1(t) / \log t \to 0$ as $t\to \infty$.
\end{proof}

Using the relation between cumulants and moments we can now obtain the corresponding asymptotic behavior of the moments. This will yield intermittency as defined in \eqref{intermittency}. In central limit type theorems with finite variance one supposes that the mean is zero. We shall do this here as well and thus set the first cumulant $\kappa_X^{(1)}= 0$.

\begin{theorem}\label{cor:intermittency-integrated}
Suppose that for the non-Gaussian supOU process $X$ the assumptions of Lemma \ref{thm:intermittency-integrated} hold with $\alpha>0$, $\kappa_X^{(1)}= 0$ and $\kappa_X^{(2)} \neq 0$. If $\tau_{X^*}$ is the scaling function \eqref{deftau} of $X^*=\{X^*(t), \, t \geq 0\}$, then for every $q\geq q^*$
\begin{equation*}
	\tau_{X^*}(q) = q-\alpha,
	\end{equation*}
where $q^*$ is the smallest even integer greater than $2\alpha$. In particular, for $q^*\leq p < r$
	\begin{equation*}
	\frac{\tau_{X^*}(p)}{p} < \frac{\tau_{X^*}(r)}{r}
	\end{equation*}
	and hence $X^*$ is intermittent.
\end{theorem}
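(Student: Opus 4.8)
The plan is to establish the formula first at even integers, where the moment–cumulant relation turns Lemma~\ref{thm:intermittency-integrated} into a statement about moments, and then to propagate it to all real $q\ge q^*$ by convexity of $\tau_{X^*}$.

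\emph{Reduction to even integers.} Since $\kappa_X^{(1)}=0$ we have $\E X^*(t)=0$, so for an even integer $m$ the absolute moment is the raw moment, $\E|X^*(t)|^m=\E\,(X^*(t))^m$, and I would expand it through the partition (moment–cumulant) formula
$$
\E\,(X^*(t))^m=\sum_{P}\ \prod_{B\in P}\kappa_{X^*}^{(|B|)}(t),
$$
the sum being over all partitions $P$ of $\{1,\dots,m\}$. Because $\kappa_{X^*}^{(1)}(t)=0$, only partitions whose blocks all have size at least $2$ survive. By Theorem~\ref{thm:integratedsupOU:cumulants} a block of size $j$ contributes $\kappa_{X^*}^{(j)}(t)=\kappa_X^{(j)}\,j\,I_{j-1}(t)$, a quantity of order $t^{\sigma_{X^*}(j)}$ up to a slowly varying factor, so a partition into blocks $j_1,\dots,j_r$ grows like $t^{\sum_i\sigma_{X^*}(j_i)}$ and the leading behaviour is governed by the partition maximizing $\sum_i\sigma_{X^*}(j_i)$.

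\emph{The top cumulant dominates.} The one-block partition yields $\kappa_{X^*}^{(m)}(t)\sim C\,t^{m-\alpha}$: since $m\ge q^*>2\alpha$ forces $m>\alpha+1$ in every regime, Lemma~\ref{thm:intermittency-integrated} gives $\sigma_{X^*}(m)=m-\alpha$, and the constant $C$ is nonzero because $X$ is non-Gaussian. Indeed, for an infinitely divisible law with analytic cumulant function and L\'evy measure $\mu_X$ one has $\kappa_X^{(m)}=\int_{\R}x^{m}\mu_X(dx)$ for $m\ge 3$, and for even $m$ the integrand is strictly positive off the origin, so $\kappa_X^{(m)}>0$ unless $\mu_X\equiv0$, i.e.\ unless $X$ is Gaussian (for $m=2$ this is the standing hypothesis $\kappa_X^{(2)}\ne0$). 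The crux is then to show that every partition with $r\ge2$ blocks has a strictly smaller exponent. This is the step I expect to be the main obstacle, since Lemma~\ref{thm:intermittency-integrated} only controls $\sigma_{X^*}(j)$ for $j>\alpha+1$, whereas for $\alpha\ge1$ the small blocks lie outside that range. I would first extract from \eqref{Im-1} the crude bound $\sigma_{X^*}(j)\le\max\{1,\,j-\alpha\}$ valid for all $j\ge2$: for $2\le j<\alpha+1$ the term $t\,\xi\,\xi^{-j}$ in \eqref{Im-1} integrates to a finite multiple of $t$ (here $\int_0^\infty\xi^{1-j}\pi(d\xi)<\infty$ because $\pi$ is regularly varying of index $\alpha$ at the origin), so $\sigma_{X^*}(j)\le1$. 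Writing $L=\{i:j_i>\alpha+1\}$ for the ``large'' blocks, $m_L=\sum_{i\in L}j_i$, and using that the number of small blocks is at most $\tfrac12(m-m_L)$ since each block has size at least $2$, one obtains
$$
\sum_i\sigma_{X^*}(j_i)\ \le\ \tfrac{m}{2}+\tfrac{m_L}{2}-|L|\,\alpha\ <\ m-\alpha ,
$$
the last inequality holding for every $r\ge2$ precisely because $m>2\alpha$; the binding case is the all-small (essentially all-pairs) partition, whose exponent is at most $m/2<m-\alpha$. Hence the one-block term dominates, $\E\,(X^*(t))^m\sim C\,t^{m-\alpha}L_1(t)$, and $\tau_{X^*}(m)=m-\alpha$ for every even integer $m\ge q^*$.

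\emph{From even integers to all $q\ge q^*$, and intermittency.} The values $\tau_{X^*}(q^*+2k)=q^*+2k-\alpha$ are collinear on the line $q\mapsto q-\alpha$, whose consecutive difference quotients all equal $1$. Since $q\mapsto\log\E|X^*(t)|^q$ is convex for each $t$—and hence $\tau_{X^*}$ is convex, as recorded after \eqref{deftau}—the function $\tau_{X^*}$ lies below every chord, giving $\tau_{X^*}(q)\le q-\alpha$; and monotonicity of its difference quotients squeezes the slope on each interval between consecutive even integers to equal $1$, giving $\tau_{X^*}(q)\ge q-\alpha$ there as well (the first interval $[q^*,q^*+2]$ is handled by comparing the secant slope on $(q,q^*+2)$ with the unit slope on $[q^*+2,q^*+4]$). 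The same convexity applied to the $\liminf$ and $\limsup$ in $t$ shows the limit \eqref{deftau} exists throughout. Therefore $\tau_{X^*}(q)=q-\alpha$ for all real $q\ge q^*$, so $\tau_{X^*}(q)/q=1-\alpha/q$ is strictly increasing; taking any $q^*\le p<r$ gives $\tau_{X^*}(p)/p<\tau_{X^*}(r)/r$, which is intermittency in the sense of \eqref{intermittency}.
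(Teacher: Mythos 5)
Your proposal follows essentially the same route as the paper's proof: expand $\E(X^*(t))^m$ at even integers via the moment--cumulant (Bell polynomial/partition) formula, use $\kappa_{X^*}^{(1)}(t)=0$ to kill blocks of size one, show the single-block term $\kappa_{X^*}^{(m)}(t)\sim C L(t)t^{m-\alpha}$ dominates, and then extend from even integers to all $q\ge q^*$ by convexity. Two of your refinements are genuinely nicer than the paper's treatment: your unified counting inequality $\sum_i\sigma_{X^*}(j_i)\le \tfrac m2+\tfrac{m_L}{2}-|L|\alpha<m-\alpha$ replaces the paper's three-case analysis ($\alpha<1$; $\alpha>1$, $\alpha\notin\N$; $\alpha\in\N$) by a single argument, and your direct justification that $\kappa_X^{(m)}\neq 0$ for even $m$ (cumulants of order $\ge 3$ of an infinitely divisible law are moments of its L\'evy measure) replaces the paper's citation. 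Your convexity step, including the explicit handling of the $\liminf/\limsup$ issue via Lyapunov convexity of $q\mapsto\log\E|X^*(t)|^q$ at fixed $t$, is the same idea as the paper's three-point lemma, phrased through chords and slopes, and is if anything more careful about existence of the limit \eqref{deftau} at non-integer $q$.

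There is, however, one hole in your "crude bound": you assert $\sigma_{X^*}(j)\le\max\{1,\,j-\alpha\}$ for \emph{all} $j\ge2$, but your justification ($\int_0^\infty\xi^{1-j}\pi(d\xi)<\infty$) only covers $2\le j<\alpha+1$, while Lemma \ref{thm:intermittency-integrated} covers $j>\alpha+1$. When $\alpha\in\N$ the block size $j=\alpha+1$ falls in neither range, and there $\int_0^\infty\xi^{-\alpha}\pi(d\xi)$ may well be infinite under \eqref{regvarofpi}, so the linear bound is not available. The case $\alpha=1$ makes the omission conspicuous: the "small" blocks are exactly the pairs $j=2=\alpha+1$, your stated justification is vacuous (the range $2\le j<2$ is empty), and pair partitions are precisely the binding competitors in your inequality. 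The fix is the one the paper uses in its case $\alpha=1,2,\dots$: Karamata's theorem \eqref{proof:thm:intermittency-integrated:21} gives $|\kappa_{X^*}^{(\alpha+1)}(t)|\le C\,t^{1+\varepsilon}$ for every $\varepsilon>0$ and $t$ large, i.e.\ $\sigma_{X^*}(\alpha+1)\le 1$ in the $\limsup$ sense; since your strict inequality $<m-\alpha$ holds with a positive margin uniformly over the finitely many partitions of $\{1,\dots,m\}$, the $\varepsilon$-weakened bound still suffices once $\varepsilon$ is small enough. With that boundary case patched, your argument is complete and correct.
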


\begin{proof}
The marginal distribution of $X$ is selfdecomposable and hence infinitely divisible. Since it is not Gaussian, the L\'evy measure is non-null and by \cite[Remark 3.4.]{gupta2009cumulants} we have that for every even $m$, $\kappa_X^{(m)} \neq 0$. Using the expression for moment in terms of cumulants (see e.g.~\cite[Proposition 3.3.1]{peccati2011wiener}), for an even integer $m$ we have
\begin{equation}\label{proof:cor:intermittency-integrated}
  E|X^*(t)|^m = E(X^*(t))^m = \sum_{k=1}^m B_{m,k} \left( \kappa_{X^*}^{(1)}(t), \dots, \kappa_{X^*}^{(m-k+1)}(t) \right),
\end{equation}
where $B_{m,k}$ is the partial Bell polynomial given by (see \cite[Definition 2.4.1]{peccati2011wiener})
$$ B_{m,k}(x_1,\dots,x_{m-k+1})
$$
\begin{equation}\label{proof:cor:intermittency-integrated1}
   = \sum_{r_1,\dots,r_{m-k+1}} \frac{m!}{r_1! \cdots r_{m-k+1}!} \left(\frac{x_1}{1!}\right)^{r_1} \cdots \left(\frac{x_{m-k+1}}{(m-k+1)!}\right)^{r_{m-k+1}}
\end{equation}
and the sum is over all nonnegative integers $r_1,\dots,r_{m-k+1}$ satisfying 
\begin{equation}\label{proof:cor:intermittency-integrated2}
r_1+\cdots +r_{m-k+1}=k
\end{equation}
and 
\begin{equation}\label{proof:cor:intermittency-integrated3}
1 r_1 + 2 r_2 + \cdots + (m-k+1) r_{m-k+1}=m. 
\end{equation}

For $l> \alpha+1$ such that $\kappa_{X}^{(l)}\neq 0$, we have from the proof of Lemma \ref{thm:intermittency-integrated} that $\kappa_{X^*}^{(l)}(t) \sim L_l (t) t^{l-\alpha}$ as $t\to \infty$ with $L_l$ slowly varying at infinity. On the other hand, if $\kappa_{X}^{(l)}= 0$, then also $\kappa_{X^*}^{(l)}(t)=0$ by \eqref{e:kIm}. Since by the assumption $\kappa_{X^*}^{(1)}(t)=0$, the nonzero terms of the sum in the expression for $B_{m,k} \left(  \kappa_{X^*}^{(1)}(t), \dots, \kappa_{X^*}^{(m-k+1)}(t) \right)$ are obtained when $r_1=0$. 

\textbf{Case $\alpha<1$}. Assume for the moment that $\alpha <1$ so that the previous discussion applies for any $l \geq 2$. Now we can write
$$
B_{m,k} \left( \kappa_{X^*}^{(1)}(t), \dots, \kappa_{X^*}^{(m-k+1)}(t) \right)
$$
\begin{align} 
  &\sim \sum_{r_2,\dots,r_{m-k+1}} L_{r_2,\dots,r_{m-k+1}}(t) t^{(2-\alpha) r_2} \cdots t^{(m-k+1-\alpha) r_{m-k+1}}\nonumber \\
&= \sum_{r_2,\dots,r_{m-k+1}} L_{r_2,\dots,r_{m-k+1}}(t) t^{2 r_2 + \cdots + (m-k+1) r_{m-k+1} - \alpha \left( r_2 + \cdots + r_{m-k+1} \right)}\nonumber \\
&= \sum_{r_2,\dots,r_{m-k+1}} L_{r_2,\dots,r_{m-k+1}}(t) t^{m - \alpha k},\label{e:proof:cor:intermittency-integrated:11}
\end{align}
where $L_{r_2,\dots_{m-k+1}}(t)$ are slowly varying functions coming from the product of powers of $L_1,\dots,L_{m-k+1}$. If one of the cumulants $\kappa_{X^*}^{(3)}(t), \dots, \kappa_{X^*}^{(m-k+1)}(t)$ is zero, say $\kappa_{X^*}^{(l)}(t)$, then \eqref{e:proof:cor:intermittency-integrated:11} should be understood in the sense that the term in the sum is zero unless $r_l=0$. Since $\kappa_X^{(m)} \neq 0$, the sum \eqref{e:proof:cor:intermittency-integrated:11} for $k=1$ contains at least one term of the form $L_{r_2,\dots,r_{m}}(t) t^{m - \alpha}$. Finally then from \eqref{proof:cor:intermittency-integrated} we have that for some slowly varying function $\widetilde{L}$
\begin{equation}\label{proof:conclusion1}
E|X^*(t)|^m \sim \widetilde{L}(t) t^{m-\alpha}
\end{equation}
and so $\tau_{X^*}(m) = m-\alpha$ for any even integer $m\geq 2$.

\medskip\textbf{Case $\alpha>1$, $\alpha\notin\N$}. Now suppose $\alpha\geq 1$ and $m$ is an even integer greater than $2 \alpha$. Again, the term for $k=1$ in the sum \eqref{proof:cor:intermittency-integrated} would contain $L_{r_2,\dots,r_{m}}(t) t^{m - \alpha}$. It remains to show that the terms involving cumulants of order $j \in \{2,\dots,\lfloor \alpha +1 \rfloor \}$ will not dominate the $t^{m - \alpha}$ term. Indeed, for  $j \in \{2,\dots,\lfloor \alpha +1 \rfloor \}$ we have that $\int_0^\infty \xi^{-j+1} \pi(d \xi) < \infty$ and from \eqref{e:kIm} and \eqref{proof:thm:intermittency-integrated:1a} it follows that
\begin{eqnarray}\label{e:boundtcumulants}
\left| \kappa_{X^*}^{(j)}(t) \right| 
&=& \left| \kappa_X^{(j)} \right| j \left| \int_0^\infty \int_0^{t} \left(1- e^{-\xi s} \right)^{j-1} ds \xi^{-j+1} \pi(d\xi) \right|
\nonumber \\
&\leq& t  \left| \kappa_X^{(j)} \right| j  \int_0^\infty \xi^{-j+1} \pi(d\xi) =: C_j t .
\end{eqnarray}
Considering the terms appearing in the sum \eqref{proof:cor:intermittency-integrated1} and using Lemma \ref{thm:integratedsupOU:cumulants} and \eqref{e:boundtcumulants}, one can see that, up to slowly varying function, each term can be bounded by the following power of $t$
\begin{equation*}
t^{r_2} \cdots t^{r_{\lfloor \alpha +1 \rfloor}} \left(t^{\lfloor \alpha +1 \rfloor +1 -\alpha} \right)^{r_{\lfloor \alpha +1 \rfloor +1}} \cdots \left(t^{m-k +1 -\alpha} \right)^{r_{m-k +1}}
\end{equation*}
with nonnegative integers $r_1,\dots,r_{m-k+1}$ satisfying \eqref{proof:cor:intermittency-integrated2} and \eqref{proof:cor:intermittency-integrated3}. One will get the highest power of $t$ by setting all the $r_j's$ to zero but one, so that $r_j\leq m/j$ for $j \in \{2,\dots,\lfloor \alpha +1 \rfloor \}$. Since $j \in \{2,\dots,\lfloor \alpha +1 \rfloor \}$, the highest value is achieved when $j=2$, corresponding to the exponent $m/2$. Hence, the dominant term as $t\to \infty$ coming from cumulants of order $j \in \{2,\dots,\lfloor \alpha +1 \rfloor \}$ would be $\left(\kappa_{X^*}^{(j)}(t)\right)^{m/2} \leq C t^{m/2}$. If $m/2<m-\alpha$, then the term containing $L_{r_2,\dots,r_{m}}(t) t^{m - \alpha}$ would dominate the term bounded by $t^{m/2}$. But this holds, since $m-\alpha>m/2 \Leftrightarrow m>2\alpha$ and hence we proved \eqref{proof:conclusion1} for any even integer greater than $2 \alpha$.

\medskip
\textbf{Case $\alpha=1,2,\cdots$}. The problem may appear with $j=\lfloor \alpha +1 \rfloor=\alpha+1$ but the argument will go along the same lines as the case $\alpha>1$, $\alpha \notin \N$. If $\int_0^\infty \xi^{-1} \pi(d \xi) < \infty$, then the argument applies unchanged. Suppose that $\int_0^\infty \xi^{-1} \pi(d \xi) = \infty$. For $\kappa_{X^*}^{(\alpha+1)}(t)$,  \eqref{proof:thm:intermittency-integrated:21} holds. Given $\varepsilon>0$ we can take $t$ large enough so that 
\begin{equation*}
|\kappa_{X^*}^{(\alpha+1)}(t)| \leq C t^{2-\alpha+\varepsilon} \leq C t^{1+\varepsilon}.
\end{equation*}
By the same argument as in the proof of case $\alpha>1$, we would have that if we take $\varepsilon$ small enough, then the term containing $L_{r_2,\dots,r_{m}}(t) t^{m - \alpha}$ dominates the term bounded by $t^{(1+\varepsilon)m/2}$. Hence, \eqref{proof:conclusion1} holds when $\alpha\in \N$ for every even integer $m>2 \alpha$.

We have now showed that the theorem holds for any even integer $m$ greater than $2\alpha$. To remove the restriction that $m$ is an even integer we use convexity. We can do so since the scaling function is always convex (\cite[Proposition 2.1(ii)]{GLST2015}). Thus, by applying the following lemma, we conclude that $\tau_{X^*}(q) = q-\alpha$ for any $q\geq q^*$ where $q^*$ is the smallest even integer greater than $2\alpha$.
\end{proof}

\begin{lemma}
Suppose that $\alpha>0$ and $f$ is a convex function such that $f(q)=q-\alpha$ for three values of $q$, namely $q \in \{x,y,z\}$, $x<y<z$. Then the function $f$ must be a straight line segment, i.e.~$f(q)=q-\alpha$ for any $q$ in the interval $[x,z]$.
\end{lemma}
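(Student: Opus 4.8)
The plan is to use only the fact that the three prescribed points $(x,f(x))$, $(y,f(y))$, $(z,f(z))$ all lie on the single affine line $\ell(q):=q-\alpha$, and to deduce from convexity that $f$ is squeezed onto $\ell$ throughout $[x,z]$. I would establish the two inequalities $f(q)\le \ell(q)$ and $f(q)\ge \ell(q)$ separately on the whole interval; together they give the claim.

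For the upper bound I would apply convexity to the endpoints $x<z$: for any $q\in[x,z]$, writing $q=(1-t)x+tz$ with $t\in[0,1]$, convexity gives $f(q)\le (1-t)f(x)+t f(z)$. Since $f(x)=\ell(x)$ and $f(z)=\ell(z)$ by hypothesis and $\ell$ is affine, the right-hand side equals $(1-t)\ell(x)+t\ell(z)=\ell(q)$. Hence $f(q)\le \ell(q)$ for every $q\in[x,z]$.

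For the lower bound I would use the middle point $y$, whose prescribed value $f(y)=\ell(y)$ is what forces $f$ back up to the line. Fix $q\in[x,y)$ and write $y$ as a convex combination of $q$ and $z$, say $y=\lambda q+(1-\lambda)z$; since $q<y<z$ one has $\lambda=(z-y)/(z-q)\in(0,1)$. Convexity gives $f(y)\le \lambda f(q)+(1-\lambda)f(z)$. Substituting $f(y)=\ell(y)$ and $f(z)=\ell(z)$ and using that $\ell$ is affine, so $\ell(y)=\lambda\ell(q)+(1-\lambda)\ell(z)$, the terms in $z$ cancel and, after dividing by $\lambda>0$, this reduces to $\ell(q)\le f(q)$. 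The symmetric argument for $q\in(y,z]$ writes $y=\mu x+(1-\mu)q$ with $\mu\in(0,1)$ and yields the same inequality. Thus $f(q)\ge \ell(q)$ on all of $[x,z]$, and the point $q=y$ is covered directly by hypothesis.

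Combining the two bounds gives $f(q)=\ell(q)=q-\alpha$ for every $q\in[x,z]$, as required. There is no serious obstacle here: the only step demanding any care is the lower bound, where one must use the prescribed value at the \emph{interior} point $y$ rather than at the endpoints, and check that the convex-combination coefficient is strictly positive so that the division is legitimate. The value of $\alpha$ and the unit slope play no role beyond fixing the affine function $\ell$; the statement is the standard fact that a convex function agreeing with an affine function at three points of an interval coincides with it on the whole interval.
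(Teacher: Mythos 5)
Your proof is correct, and it takes a (mildly but genuinely) different route from the paper. The paper argues by contradiction: it assumes $f$ lies strictly below the chord at some $p\in(x,z)$, reduces without loss of generality to $p\in(x,y)$, introduces the auxiliary line $l$ through $(p,f(p))$ and $(z,f(z))$, and computes that $l(y)<y-\alpha=f(y)$, contradicting the convexity requirement $f(y)\le l(y)$. Your argument is the direct, contrapositive form of the same convexity inequalities: your upper bound $f\le\ell$ on $[x,z]$ is the convexity inequality across the endpoints (which in the paper is implicit in the statement that failure of equality means strict inequality below the chord), and your lower bound---writing $y$ as a convex combination of $q$ and $z$, or of $x$ and $q$, and cancelling the common terms---is precisely the inequality whose violation the paper exhibits at $y$. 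What your presentation buys: it needs no reduction ``without loss of generality'' (both sides of $y$ are handled explicitly and symmetrically), no auxiliary line, and no contradiction bookkeeping; it also makes transparent that the slope $1$ and the constant $\alpha$ are irrelevant, i.e.~the statement is the general fact that a convex function agreeing with an affine function at three points coincides with it on the interval spanned by the outer two. The paper's version, on the other hand, pinpoints a single offending point $p$, which is a natural way to phrase the argument but yields nothing extra here. Both proofs are complete and of comparable length.
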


\begin{proof}
Suppose by contradiction that there is $p \in (x,z)$ such that
\begin{equation*}
f(p) < \frac{z-p}{z-x} f(x) + \frac{p-x}{z-x} f(z),
\end{equation*}
so that $f$ is not a straight line. With no loss of generality, focus on the interval $(x,y)$ and let $p \in (x,y)$ so that
\begin{equation}\label{fpineq}
f(p) < \frac{y-p}{y-x} f(x) + \frac{p-x}{y-x} f(y).
\end{equation}
Let $l$ denote the line through points $(p,f(p))$, $(z,f(z))$, i.e.
\begin{equation*}
l(q)=f(z)+ \frac{f(z)-f(p)}{z-p} (q-z).
\end{equation*}
Note that 
\begin{equation*}
l(y) = f(z)+ \left( f(z)-f(p) \right)\frac{y-z}{z-p} = f(z) \frac{y-p}{z-p} + f(p) \frac{z-y}{z-p}.
\end{equation*}
By convexity, we should have $f(y) \leq l(y)$. However, by applying \eqref{fpineq}
\begin{align*}
l(y) &<f(z) \frac{y-p}{z-p} + \frac{z-y}{z-p} \left( \frac{y-p}{y-x} f(x) + \frac{p-x}{y-x} f(y) \right)\\
&=(z-\alpha) \frac{y-p}{z-p} + \frac{z-y}{z-p} \left( \frac{y-p}{y-x} (x-\alpha) + \frac{p-x}{y-x} (y-\alpha) \right)\\
&=y-\alpha=f(y),
\end{align*}
which gives contradiction. Hence, we conclude $f$ is linear.
\end{proof}

We can now apply Theorem \ref{thm:partialsumsupOU:cumulants} to establish the following result for the partial sum supOU process. We do not include the proof since the result is similar to that for the integrated process $X^*$. In fact, the moments and cumulants of $X^+(t)$ and $X^*(t)$ have the same asymptotic behavior as $t\to \infty $, and therefore $\sigma _{X^+}(m)=\sigma _{X^*}(m)$.

\begin{lemma}\label{thm:intermittency-partilsum}
Suppose that the supOU process satisfies the conditions of Proposition \ref{prop:corrasymptotic} and satisfies \eqref{regvarofpi} with some $\alpha>0$, $\kappa_X$ is analytic in a neighborhood of the origin and let $\sigma_{X^+}$ be the cumulant based scaling function \eqref{defsigma} of the partial sum process $\{X^+(t), \, t \geq 0\}$. If $\kappa_X^{(1)}\neq 0$, then
\begin{equation*}
\sigma_{X^+}(1)=1.
\end{equation*}
If $m>\alpha+1$ and $\kappa_X^{(m)}\neq 0$, then
\begin{equation*}
  \sigma_{X^+}(m) = m - \alpha.
\end{equation*}
\end{lemma}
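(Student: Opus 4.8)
The plan is to follow the proof of Lemma~\ref{thm:intermittency-integrated} step for step, simply replacing the integrated kernel $I_{m-1}(t)$ by the partial-sum kernel $J_{m-1}(t)$. By Theorem~\ref{thm:partialsumsupOU:cumulants} one has $\kappa_{X^+}^{(m)}(t)=\kappa_X^{(m)}J_{m-1}(t)$, so whenever $\kappa_X^{(m)}\neq 0$,
\begin{equation*}
  \sigma_{X^+}(m)=\lim_{t\to\infty}\frac{\log\left|J_{m-1}(t)\right|}{\log t}.
\end{equation*}
For $m=1$ no work is needed: the first cumulant is the mean, $\kappa_{X^+}^{(1)}(t)=\E X^+(t)=\lfloor t\rfloor\,\kappa_X^{(1)}$, hence $J_0(t)=\lfloor t\rfloor$ and $\sigma_{X^+}(1)=1$ as soon as $\kappa_X^{(1)}\neq 0$.

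For $m\geq 2$ I would not start from the fully expanded formula \eqref{Jm-1} but from the more compact intermediate form \eqref{Jm-1-proof} obtained in the proof of Theorem~\ref{thm:partialsumsupOU:cumulants},
\begin{equation*}
  J_{m-1}(t)=\int_0^\infty\frac{1}{\left(1-e^{-\xi}\right)^m}\left(\left(1-e^{-m\xi}\right)\sum_{k=1}^{\lfloor t\rfloor-1}\left(1-e^{-k\xi}\right)^m+\left(1-e^{-\lfloor t\rfloor\xi}\right)^m\right)\pi(d\xi),
\end{equation*}
so that the binomial bookkeeping disappears and only a \emph{sum part} and a \emph{boundary part} remain. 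The key step is to compare the inner sum with its Riemann integral: since $w\mapsto(1-e^{-w})^m$ is increasing and bounded by $1$,
\begin{equation*}
  \sum_{k=1}^{\lfloor t\rfloor-1}\left(1-e^{-k\xi}\right)^m=\frac{1}{\xi}\int_0^{(\lfloor t\rfloor-1)\xi}\left(1-e^{-w}\right)^m\,dw+R(\xi,t),\qquad 0\le R(\xi,t)\le\left(1-e^{-\lfloor t\rfloor\xi}\right)^m.
\end{equation*}
Using the elementary equivalents $(1-e^{-m\xi})/(1-e^{-\xi})^m\sim m\,\xi^{1-m}$ and $(1-e^{-\xi})^{-m}\sim\xi^{-m}$ as $\xi\to 0$, the main part of the sum term is brought into exactly the integral form handled for $I_{m-1}(t)$, up to a constant and a harmless change in the exponent of $(1-e^{-w})$. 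I would then reproduce the Karamata argument of Lemma~\ref{thm:intermittency-integrated}: pass to $\widetilde\pi=\pi\circ(1/\,\cdot\,)$, invoke \cite[Theorem VIII.9.2.]{feller1971theory} to obtain $\int_{w/t}^\infty\xi^{-m}\pi(d\xi)\sim\frac{\alpha}{m-\alpha}L(t/w)(t/w)^{m-\alpha}$, and apply \cite[Proposition 4.1.2]{bingham1989regular} to show the surviving $w$-integral varies slowly. The hypothesis $m>\alpha+1$ is precisely what makes $\int_0^\infty(1-e^{-w})^m w^{\alpha-m}\,dw$ converge at infinity, yielding a finite nonzero constant and the equivalence $J_{m-1}(t)\sim C\,L_1(t)\,t^{m-\alpha}$, hence $\sigma_{X^+}(m)=m-\alpha$.

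The main obstacle is error control, because the crude bound $R\le 1$ is useless: the prefactor $m\xi^{1-m}$ is non-integrable against $\pi$ near the origin exactly when $m>\alpha+1$. The remedy is to keep the sharper bound $R(\xi,t)\le(1-e^{-\lfloor t\rfloor\xi})^m$, so that the error is dominated by $\int_0^\infty(1-e^{-m\xi})(1-e^{-\lfloor t\rfloor\xi})^m(1-e^{-\xi})^{-m}\pi(d\xi)$; the extra factor $1-e^{-m\xi}\sim m\xi$ lowers the Karamata exponent by one, giving an error of order $O(t^{m-1-\alpha}L(t))=o(t^{m-\alpha})$. A parallel estimate shows the boundary part $\int_0^\infty(1-e^{-\lfloor t\rfloor\xi})^m(1-e^{-\xi})^{-m}\pi(d\xi)$ is itself only of order $t^{m-\alpha}$, so it affects the constant but not the exponent, and the part of the sum integral away from $\xi=0$ is $O(t)=o(t^{m-\alpha})$ since $m-\alpha>1$. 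Finally, replacing $\lfloor t\rfloor$ by $t$ changes things by a bounded amount and does not affect the logarithmic limit, so $\sigma_{X^+}(m)=m-\alpha$, matching $\sigma_{X^*}(m)$ as claimed.
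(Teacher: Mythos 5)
Your proof is correct and follows exactly the route the paper intends: the paper in fact omits this proof entirely, remarking only that the cumulants of $X^+(t)$ and $X^*(t)$ have the same asymptotic behavior as $t\to\infty$, and your argument makes that remark rigorous with the same Karamata machinery used for Lemma \ref{thm:intermittency-integrated}, applied to the compact form of $J_{m-1}(t)$ from the proof of Theorem \ref{thm:partialsumsupOU:cumulants}. The details you supply all check out: $J_0(t)=\lfloor t\rfloor$ gives $\sigma_{X^+}(1)=1$; the Riemann-sum comparison with remainder bounded by $\left(1-e^{-\lfloor t\rfloor\xi}\right)^m$ yields an error of order $O\!\left(t^{m-1-\alpha}L(t)\right)=o\!\left(t^{m-\alpha}\right)$; the nonnegative boundary term is itself $O\!\left(t^{m-\alpha}L(t)\right)$, so (all contributions being nonnegative) it can affect only the constant, not the exponent $m-\alpha$ extracted by the logarithmic limit; and the region $\xi$ bounded away from $0$ contributes $O(t)$, which is negligible precisely because $m>\alpha+1$.
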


Set $\alpha=2(1-H)$ with $H\in (1/2,1)$ so that $\alpha \in (0,1)$. A special case of Lemma \ref{thm:intermittency-partilsum} was proved in \cite{GLST2015} for the specific situation of the Example \ref{exa:discretesup}. In the notation of Example \ref{exa:discretesup}, the case considered there corresponds to a discrete type superposition $X(t)=\sum_{k=1}^\infty X^{(k)}(t)$ obtained by choosing
\begin{equation*}
\lambda_k=\lambda/k,\ \lambda>0 \ \text{ and } \ p_k = C \zeta(1+2(1-H)) /k^{1+2(1-H)},\ C>0,
\end{equation*}
where $\zeta$ is the Riemann zeta function. In addition, it is assumed that the cumulants of the standard OU type processes $\{X^{(k)}(t)\}$ scale in a specific way. Under these conditions, the cumulants of the centered partial sum process $S(t) = \sum_{i=1}^{\lfloor t \rfloor} \left( X(i) - \E X(i) \right)$ are shown to have the form
\begin{equation*}
  \kappa_S^{(m)} (Nt) = C_m L(N) \lfloor Nt \rfloor^{m-2(1-H)} \left(1+o(1)\right),
\end{equation*}
as $N\to \infty$, where $C_m$ is a positive constant and $L$ a slowly varying function.

Using the same argument as in the proof of Theorem \ref{cor:intermittency-integrated}, we obtain the following result on intermittency of the partial sum process.

\begin{theorem}\label{cor:intermittency-partilsum}
Suppose that for the non-Gaussian supOU process $X$ the assumptions of Lemma \ref{thm:intermittency-integrated} hold with $\alpha >0$, $\kappa_X^{(1)}= 0$ and $\kappa_X^{(2)} \neq 0$. If $\tau_{X^+}$ is the scaling function \eqref{deftau} of $X^+=\{X^+(t), \, t \geq 0\}$, then for every $q\geq q^*$
\begin{equation*}
  \tau_{X^+}(q) = q-\alpha.
\end{equation*}
where $q^*$ is the smallest even integer greater than $2\alpha$. Thus $X^+$ is intermittent.
\end{theorem}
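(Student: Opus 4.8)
The plan is to mirror the proof of Theorem \ref{cor:intermittency-integrated} almost verbatim, with $X^+$ and the quantity $J_{m-1}(t)$ from \eqref{Jm-1} playing the roles of $X^*$ and $I_{m-1}(t)$. Since the marginal law of $X$ is selfdecomposable and non-Gaussian, its Lévy measure is non-null, so by \cite[Remark 3.4.]{gupta2009cumulants} every even cumulant $\kappa_X^{(m)}$ is nonzero, and by Theorem \ref{thm:partialsumsupOU:cumulants} the same holds for $\kappa_{X^+}^{(m)}(t)$. For even $m$ I would expand $E|X^+(t)|^m = E(X^+(t))^m$ in terms of the cumulants $\kappa_{X^+}^{(1)}(t),\dots,\kappa_{X^+}^{(m)}(t)$ via the Bell-polynomial identity \eqref{proof:cor:intermittency-integrated}--\eqref{proof:cor:intermittency-integrated3}, exactly as before. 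Because $\kappa_{X^+}^{(1)}(t)=\kappa_X^{(1)}J_0(t)=0$ under the hypothesis $\kappa_X^{(1)}=0$, only the terms with $r_1=0$ survive.

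The two analytic inputs that must be supplied for the partial-sum case are: (i) the sharp asymptotic $\kappa_{X^+}^{(l)}(t)\sim L_l(t)t^{l-\alpha}$ for each $l>\alpha+1$ with $\kappa_X^{(l)}\neq 0$, and (ii) the linear bound $|\kappa_{X^+}^{(j)}(t)|\leq C_j t$ for the low-order cumulants $j\in\{2,\dots,\lfloor \alpha+1\rfloor\}$, the analogue of \eqref{e:boundtcumulants}. Both follow by applying the Karamata-type argument of Lemma \ref{thm:intermittency-partilsum} to the representation \eqref{Jm-1}: the dominant contribution comes from the mass of $\pi$ near the origin, where $(1-e^{-\xi})^{-m}\sim \xi^{-m}$ and $1-e^{-m\xi}\sim m\xi$, so that the inner sum $\sum_{k=1}^{\lfloor t\rfloor -1}(1-e^{-k\xi})^m$ behaves, after the substitution $u=1/\xi$ and Karamata's theorem, just like the integral $\int_0^{\xi t}(1-e^{-w})^{m-1}\,dw$ that produced the $t^{m-\alpha}$ term for $I_{m-1}(t)$. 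This is precisely the assertion, recorded just before the statement, that the cumulants of $X^+(t)$ and $X^*(t)$ share the same asymptotics, whence $\sigma_{X^+}(m)=\sigma_{X^*}(m)$.

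With (i) and (ii) in hand the remainder is identical to the proof of Theorem \ref{cor:intermittency-integrated}: the dominant-balance computation \eqref{e:proof:cor:intermittency-integrated:11} shows the $k=1$ term of \eqref{proof:cor:intermittency-integrated} contributes order $t^{m-\alpha}$ up to a slowly varying factor; the three cases $\alpha<1$, $\alpha>1$ non-integer, and $\alpha\in\N$ control the competing contribution of the low-order cumulants (the largest being bounded by $t^{m/2}$, which is dominated precisely because $m>2\alpha$); and one concludes $E|X^+(t)|^m\sim\widetilde{L}(t)t^{m-\alpha}$, i.e.\ $\tau_{X^+}(m)=m-\alpha$ for every even integer $m>2\alpha$. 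Convexity of $\tau_{X^+}$ (\cite[Proposition 2.1(ii)]{GLST2015}) together with the interpolation lemma above then upgrades this to $\tau_{X^+}(q)=q-\alpha$ for all $q\geq q^*$, and since $\tau_{X^+}(q)/q=1-\alpha/q$ is strictly increasing in $q$ for $\alpha>0$, intermittency \eqref{intermittency} follows. The only real obstacle is verifying (i)--(ii), that is, checking that replacing the integral in $I_{m-1}$ by the discrete sum in $J_{m-1}$ leaves the leading order unchanged; this is a routine sum-versus-integral comparison under regular variation, which is why one may safely omit the full repetition of the argument.
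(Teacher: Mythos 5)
Your proposal is correct and follows essentially the same route as the paper: the paper proves this theorem by invoking Lemma \ref{thm:intermittency-partilsum} (whose proof it omits, asserting that the cumulants of $X^+(t)$ and $X^*(t)$ have the same asymptotic behavior as $t\to\infty$, so that $\sigma_{X^+}(m)=\sigma_{X^*}(m)$) and then repeating verbatim the Bell-polynomial, dominant-term, and convexity argument of Theorem \ref{cor:intermittency-integrated}. If anything, your sum-versus-integral comparison for $J_{m-1}(t)$ supplies slightly more justification for the key cumulant asymptotics than the paper itself records.
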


\begin{remark}
In Example \ref{ex:finitesuplimit} (finite superpositions case) and Example \ref{ex:gaussian} (Gaussian case),  we have shown that there is no intermittency. Note that these two cases are clearly not covered in Theorems \ref{cor:intermittency-integrated} and \ref{cor:intermittency-partilsum} where we suppose a non-Gaussian process and regular variation \eqref{regvarofpi} of measure $\pi$.

On the other hand, particular examples of supOU processes satisfying conditions of Theorems \ref{cor:intermittency-integrated} and \ref{cor:intermittency-partilsum} can be obtained by choosing for the marginal distribution any selfdecomposable distribution with zero mean and analytic cumulant function (e.g.~distributions from Examples \ref{ex:X:IG} and \ref{ex:X:NIG}) and by taking the measure $\pi$ that satisfies \eqref{regvarofpi} (e.g.~measures given in Examples \ref{ex:pi:gamma}, \ref{ex:pi:ML} and \ref{ex:pi:ML2}). For any such combination we obtain an intermittent supOU process. Under these conditions, both the integrated and the partial sum process are intermittent. This implies that \eqref{thm:limitdistr} and \eqref{thm:limitmom} cannot both hold. The study of limit theorems for integrated supOU processes and how they relate to the intermittency property will appear in future work.
\end{remark}

\bigskip

\textbf{Acknowledgments:} Nikolai N. Leonenko was supported in part by projects MTM2012-32674 (co-funded by European Regional Development Funds), and MTM2015--71839--P, MINECO, Spain. This research was also supported under Australian Research Council's Discovery Projects funding scheme (project number DP160101366), and under Cardiff Incoming Visiting Fellowship Scheme and International Collaboration Seedcorn Fund.

Murad S.~Taqqu was supported by the NSF grant DMS-1309009 at Boston University.

\bibliographystyle{imsart-number}
\bibliography{References}

\end{document}